\numberwithin{equation}{section}
\newtheorem{theorem}{Theorem}[section]
\newtheorem{lemma}[theorem]{Lemma}
\newcommand{\N}{\mathbb{N}}
\begin{document}
	
\title[Analysis of a Shear beam model with suspenders in thermoelasticity of type III]{%
Analysis of a Shear beam model with suspenders\\ in thermoelasticity of type III}

\author[M. Chabekh, N. Chougui, D. F. M. Torres]{Meriem Chabekh, Nadhir Chougui, Delfim F. M. Torres}

	
\address{Meriem Chabekh\newline
\indent Laboratory of Applied Mathematics, 
Ferhat Abbas S\'etif 1  University, 
S\'etif 19000, Algeria}
\email{meriem.chabekh@univ-setif.dz}

\address{Nadhir Chougui\newline
\indent Laboratory of Applied Mathematics, 
Ferhat Abbas S\'etif 1  University, 
S\'etif 19000, Algeria}
\email{nadhir.chougui@univ-setif.dz}

\address{Delfim F. M. Torres\newline
\indent Center for Research and Development in Mathematics and Applications (CIDMA),\newline 
\indent Department of Mathematics, University of Aveiro, 3810-193 Aveiro, Portugal\newline
\indent and Research Fellow of INTI International University, 71800 Nilai, Negeri Sembilan, Malaysia}
\email{delfim@ua.pt}
	
	
\subjclass[2020]{74K10, 74F05, 74H20, 74H40, 65M60, 65M15.}

\keywords{Shear model, 
suspenders, 
thermoelasticity of type III, 
exponential decay, 
finite element discretization}
	
	
\begin{abstract}
We conduct an analysis of a one-dimensional linear problem 
that describes the vibrations of a connected suspension bridge. 
In this model, the single-span roadbed is represented as a 
thermoelastic Shear beam without rotary inertia. 
We incorporate thermal dissipation into the transverse displacement 
equation, following Green and Naghdi's theory. Our work demonstrates 
the existence of a global solution by employing classical 
Faedo--Galerkin approximations and three a priori estimates.
Furthermore, we establish exponential stability through the 
application of the energy method. For numerical study, 
we propose a spatial discretization using finite elements 
and a temporal discretization through an implicit Euler scheme. 
In doing so, we prove discrete stability properties and a priori 
error estimates for the discrete problem. To provide a practical 
dimension to our theoretical findings, we present 
a set of numerical simulations.
\end{abstract}	

\maketitle

	
\section{Introduction}

A cable-suspended beam is a structural design comprising 
a beam that is upheld by one or more cables. These cables 
have the dual role of bearing the beam's weight and preserving 
its shape to enhance stability and provide additional support. 
Cable-suspended beams find applications in a range of engineering 
contexts, including cable-stayed bridges 
and suspension bridges \cite{MR4636130}.

In this paper, we address a thermomechanical problem associated 
with a cable-suspended beam structure, exemplified by the suspension bridge. 
The key characteristic of this structure is that the roadbed's sectional 
dimensions are significantly smaller than its length (span of the bridge). 
Suspension bridges with large span lengths exhibit higher flexibility 
in comparison to alternative bridge structures. This increased flexibility 
renders them vulnerable to various dynamic loads, including wind, earthquakes, 
and the movement of vehicles. The distinctive structural features of suspension 
bridges elevate the significance of understanding their dynamic response 
to oscillations, presenting a crucial engineering challenge.
Therefore, we model the roadbed as an extensible thermoelastic Shear-type beam.
The primary suspension cable is represented as an elastic string, 
and it is linked to the roadbed through a distributed network of elastic springs. 
Our model aligns with the configuration of a Shear-suspended-beam system 
within thermoelasticity of type III, as described in the following:
\begin{equation}
\label{main:system*}	
\begin{cases}
\rho u _{tt}-\alpha u_{xx} -\lambda\left( \varphi - u\right)
 +\mu u_t =0, &\text{in}\;  (0,L) \times (0,\infty),\\ 
\rho _{1}\varphi _{tt} -K\left( \varphi _{x}+\psi \right)_{x}
+\lambda\left( \varphi - u\right)+\gamma \varphi_t +\beta \theta_x=0, 
&\text{in}\;  (0,L) \times (0,\infty),\\ 
-b\psi _{xx} +K\left(
\varphi _{x}+\psi \right)=0, &\text{in}\; (0,L) \times (0,\infty),\\ 
\rho_3\theta_{tt}-\delta \theta_{xx} +\beta\varphi_{xtt}
-\kappa\theta_{xxt}=0, &\text{in}\;  (0,L) \times (0,\infty),\\ 
u(0,t)=u(L,t)=\varphi(0,t)=\varphi(L,t)=0, & t\geqslant 0,\\
\psi(0,t)=\psi(L,t)=\theta(0,t)=\theta(L,t)=0, & t\geqslant 0,\\
u(x,0)=u_0(x),\; u_t(x,0)=u_1(x), \;\varphi(x,0)=\varphi_0(x),\; 
\varphi_t(x,0)=\varphi_1(x), & x \in (0,L),\\ 
\psi(x,0)=\psi_0(x),\;\theta(x,0)=\theta_0(x),
\;\theta_t(x,0)=\theta_1(x),& x \in (0,L).
\end{cases}
\end{equation}
Here, the symbol $t$ represents the time variable, 
while $x$ denotes the distance along the centerline 
of the beam in its equilibrium configuration. Both $L$ 
and the beam length coincide with the cable length. 
Within this framework, we use the following notations:
$u$ represents the vertical displacement of the vibrating spring in the primary cable;
$\varphi$ signifies the transverse displacement or vertical deflection 
of the beam's cross-section;
$\psi$ denotes the angle of rotation of a cross-section;
while $\theta$ is employed to represent the thermal moment of the beam.
We consider the suspender cables (ties) as linear elastic springs 
with a shared stiffness parameter $\lambda > 0$, and the constant 
$\alpha > 0$ defines the elastic modulus of the string that connects 
the cable to the deck. The constants $\rho$, $\mu$, $\rho_1$, 
$\rho_3$, $K$, $\gamma$,  $\kappa$, $\delta$, 
and $\beta \neq 0$, employed in the problem formulation,
are positive. The initial data, denoted as 
$(u_0, u_1, \varphi_0, \varphi_1, \psi_0, \theta_0, \theta_1)$, 
belong to an appropriate functional space.

For many years, there has been substantial research interest 
in the dynamic behavior and nonlinear vibrations of suspension bridges
\cite{Rubin1,Rubin2,Ahmed,Matas}. Suspension bridges are complex 
structures with distinctive dynamic characteristics, and they can 
display nonlinear responses to various external forces and loads. 
This nonlinearity may stem from factors such as large-amplitude vibrations, 
material properties, and environmental conditions. 
The appearance of string-beam systems that model 
a nonlinear coupling of a beam (the roadbed) and main cable (the string) 
were born out of the pioneering works of Lazer and McKenna \cite{Lazer,McKenna}.  
Elimination of nonlinear coupling terms in the equations of motion governing 
vertical and torsional vibrations of suspension bridges is achieved by linearization, 
as indicated in reference \cite{Abdel-Ghaffar}. Previous approaches 
have often employed models where the roadbed was based on the Euler-Bernoulli beam theory 
\cite{Bochicchio1,Bochicchio2,Bochicchio3,Bochicchio4}. The Timoshenko beam theory has 
also demonstrated superior performance in anticipating the vibrational response of a beam 
compared to a model grounded in the classical Euler-Bernoulli beam theory \cite{Hayashikawa}.

It is worth noting that the Euler--Bernoulli beam theory and the 
Timoshenko beam theory are two different approaches to model the behavior of beams. 
The Euler--Bernoulli beam theory assumes that the beam is slender and that the cross-section 
remains plane and perpendicular to the longitudinal axis of the beam during deformation. 
This theory is suitable for modeling long and slender beams that are subjected to bending loads. 
On the other hand, the Timoshenko beam theory takes into account the effects of shear deformation 
and rotational bending, which  makes this theory more accurate for analyzing short and thick beams. 
The Timoshenko equations \cite{Timoshenko} improved the Euler--Bernoulli and the 
Rayleigh beam models and such equations are given by
\begin{equation*}
\begin{cases}
\rho _{1}\varphi _{tt} 
-K\left( \varphi _{x}+\psi \right)_{x}=0,\\ 
\rho _{2}\psi _{tt}-b\psi _{xx} +K\left(
\varphi _{x}+\psi \right) =0.\\ 
\end{cases}
\end{equation*}

The Euler--Bernoulli beam theory used to represent 
the single-span roadbed has neglected the effects of shear deformation 
and rotary inertia. These effects can be taken into account by using 
more accurate models, such as the Timoshenko beam theory to have 
a deeper range of applicability and to be physically more realistic 
\cite{Arnold, Labuschagne}. Bochicchio et al. examined 
a linear problem that characterizes 
the vibrations of a coupled suspension bridge \cite{Bochicchio}. 
In their analysis, the single-span roadbed is represented as an 
extensible thermoelastic beam following the Timoshenko model, 
with heat governed by Fourier's law. They demonstrated the exponential 
decay property by employing the established semi-group theory 
and the energy method. Additionally, they conducted several 
numerical experiments to further support their findings.
Mukiawa et al. utilized the same methods to establish the 
existence and uniqueness of a weak global solution and 
to demonstrate exponential stability in 
a thermal-Timoshenko-beam system \cite{Mukiawa}. 
This system incorporated suspenders and Kelvin--Voigt damping 
and was founded on the principles of thermoelasticity, 
as described by Cattaneo's law.

The Timoshenko beam formulation is generally considered for  
determining the dynamic response of beams at higher frequencies (higher wave numbers), 
where there exist two frequency values. As a result, when there are two wave speeds 
$\sqrt{K/\rho_1}$ and $\sqrt{b/\rho_2}$, one of the wave speeds has an infinite speed 
(blow-up) for lower wave numbers. To overcome this physical drawback, simplified beam 
models have been introduced, as the truncated version for dissipative 
Timoshenko type system \cite{Almeida}, and the Shear beam model given by 
\begin{equation*}
\begin{cases}
\rho _{1}\varphi _{tt} -K\left( \varphi _{x}+\psi \right)
_{x}=0,\\ 
-b\psi _{xx} +K\left(
\varphi _{x}+\psi \right) =0,\\ 
\end{cases}
\end{equation*}
which has only one finite wave speed $\sqrt{K/\rho_1}$ for all wave numbers. 
Almeida J\'{u}nior et al. \cite{Ramos1} and Ramos et al. \cite{Ramos2}  
were pioneers in investigating the well-posedness and stability characteristics 
of the Shear beam model. This model constitutes an improvement over the 
Euler--Bernoulli beam model by adding the shear distortion effect but without 
rotary inertia. Consequently, one can analyze long-span bridges, unlike the Timoshenko 
beam theory, which is better suited for modeling beams with relatively short spans. 
This is precisely our focus here -- the examination of the Shear model 
\eqref{main:system*} with suspenders and thermal dissipation, 
introduced by thermoelasticity of Type III.

The second major aspect of problem \eqref{main:system*} under investigation 
in this paper concerns what is referred to as thermoelasticity of type III.
In classical theory of thermoelasticity, heat conduction 
is typically described using Fourier's law for heat flux. 
This theory has the un-physical property
that a sudden temperature change at some point will be felt instantly 
everywhere (that is, the heat propagation has an infinite speed).
However, experiments have shown that the speed of the thermal wave propagation 
in some dielectric crystals at low temperatures is limited. This phenomenon 
in dielectric crystals  is called the \emph{second sound}. 
In the modern theory of thermal propagation, there are several ways to overcome 
this physical  paradox. The most known is the one that proposes replacing 
Fourier's law of heat flux with Cattaneo's law \cite{Cattaneo}, in order
to obtain a heat conduction equation of hyperbolic type that describes 
the wave nature of heat propagation at low temperatures. At the turn of the century, 
Green and Naghdi introduced three other theories (known as thermoelasticity Type I, Type II, 
and Type III), based on the equality of entropy rather than the usual entropy inequality 
\cite{Green1,Green2,Green3}. In each theory, the heat flux is determined by different 
appropriate assumptions. These three theories give a comprehensive and logical explanation 
that embodies the transmission of a thermal pulse and modifies the occurrence of the 
infinite unphysical speed of heat propagation induced by the classical theory 
of heat conduction. When the theory of type I is linearized, it aligns with the classical 
system of thermoelasticity. The systems arising in thermoelasticity of type III exhibit 
dissipative characteristics, while those in type II do not sustain energy dissipation. 
It is a limiting case of thermoelasticity type III 
\cite{Chandrasekharaiah1,Chandrasekharaiah2,Messaoudi,Quintanilla,Zhang}.

Numerically, the finite element method has been utilized in various 
research studies related to control systems \cite{Bernardi,ElArwadi}.

The paper's structure is outlined as follows. In Section~\ref{sec:02}, 
we provide some preliminary information. To demonstrate the global 
existence and uniqueness of solutions, in Section~\ref{sec:03} 
we revisit the Faedo--Galerkin method, 
along with three estimates, previously discussed in references 
such as \cite{Djilali,Lions}. In Section~\ref{sec:04}, we employ 
the energy method to construct several Lyapunov functionals, 
establishing the property of exponential decay.
In Section~\ref{sec:05}, we propose a finite element 
discretization approach to solve the problem at hand. We obtain 
discrete stability results and a priori error estimates. Finally, 
in Section~\ref{sec:06}, we present some numerical simulations using MATLAB. 

Throughout the paper, the symbols $C$ and $C_i$ 
are used to represent various positive constants.


\section{Preliminaries}
\label{sec:02}

In the following, $(\cdot,\cdot)$ represents the scalar product of 
$L_2(0,L)$, and $\lVert \cdot \rVert$ denotes the norm 
$\lVert \cdot \rVert_{L_2(0,L)}$. In order to exhibit the dissipative  
nature of system (\ref{main:system*}), we introduce the new variable
\begin{equation}
\label{new variable}	
w(x,t)=\int_{0}^{t}\theta(x,s)ds +\eta(x),
\end{equation}
where $\eta(x)$ solves
\begin{equation}
\label{solution}	
\begin{cases}
\delta\Delta\eta
=\rho_3\theta_1-\kappa\Delta\theta_0+\beta \nabla \cdot \varphi_1,\\
\eta(0)=\eta(1)=0.
\end{cases}
\end{equation}
Performing an integration  of ${\text{(\ref{main:system*})}}_\text{4}$ 
with respect to $t$ and taking into account (\ref{new variable}), we get
\begin{equation*}
\begin{split}
\int_{0}^{t}\left(\rho_3 \theta_{tt} -\delta \theta_{xx}
+\beta \varphi_{xtt}-\kappa\theta_{xxt}\right)  ds&=0\\
\rho_3w_{tt}-\rho_3\theta_1-\left(\delta w_{xx}-\delta\Delta\eta \right)
+\beta\varphi_{xt}-\beta\nabla \cdot \varphi_1
-\kappa w_{xxt} +\kappa \Delta \theta_0&=0.
\end{split}
\end{equation*} 
Then (\ref{solution}) gives
\begin{equation*}
\rho_3 w_{tt}-\delta w_{xx}+\beta\varphi_{xt}-\kappa w_{xxt}=0,
\end{equation*}
and problem (\ref{main:system*}) takes the form
\begin{equation}
\label{main:system}	
\begin{cases}
\rho u _{tt}-\alpha u_{xx} -\lambda\left( \varphi - u\right)
+\mu u_t =0, &\text{in}\;  (0,L) \times (0,\infty),\\ 
\rho _{1}\varphi _{tt} -K\left( \varphi _{x}+\psi \right)_{x}
+\lambda\left( \varphi - u\right)+\gamma \varphi_t +\beta w_{xt}=0, 
&\text{in}\;  (0,L) \times (0,\infty),\\ 
-b\psi _{xx} +K\left(
\varphi _{x}+\psi \right)=0, &\text{in}\; (0,L) \times (0,\infty),\\ 
\rho_3 w_{tt}-\delta w_{xx} +\beta\varphi_{xt}-\kappa w_{xxt}=0, 
&\text{in}\;  (0,L) \times (0,\infty),\\ 
u(0,t)=u(L,t)=\varphi(0,t)=\varphi(L,t)=0, & t\geqslant 0,\\
\psi(0,t)=\psi(L,t)=w(0,t)=w(L,t)=0, & t\geqslant 0,\\
u(x,0)=u_0(x),\; u_t(x,0)=u_1(x), \;\varphi(x,0)=\varphi_0(x),\; 
\varphi_t(x,0)=\varphi_1(x), & x \in (0,L),\\ 
\psi(x,0)=\psi_0(x),\;w(x,0)=w_0(x),\;w_t(x,0)=w_1(x),
& x \in (0,L).
\end{cases}
\end{equation}

\begin{lemma}
Let $(u,\varphi,\psi,w)$ be the solution of (\ref{main:system}). 
Then the energy functional $E$, defined by
\begin{equation}
\label{Energy funct}	
E(t)=\dfrac{1}{2}\int_{0}^{L}\left( \rho u_t^2 +\alpha u_x^2
+\lambda(\varphi-u)^2+ \rho_1\varphi_t^2 
+K{(\varphi_x+\psi)}^2 +b\psi_x^2 +\rho_3 w_t^2+\delta w_x^2\right)dx, 
\end{equation}
satisfies
\begin{equation}
\label{dissipation eq}
\dfrac{dE(t)}{dt}=-\mu\int_{0}^{L}u_t^2 dx-\gamma\int_{0}^{L}
\varphi_t^2 dx-\kappa \int_{0}^{L}w_{xt}^2 dx
\leqslant 0,\; \forall\; t\geqslant 0.
\end{equation}
\end{lemma}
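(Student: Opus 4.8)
The plan is to establish identity \eqref{dissipation eq} by the standard energy (multiplier) method: test each equation of \eqref{main:system} against its natural velocity field, integrate over $(0,L)$, and add. Concretely, I would multiply the first equation by $u_t$, the second by $\varphi_t$, the third by $\psi_t$, and the fourth by $w_t$, then integrate each resulting identity over the spatial interval.

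First I would handle the four ``diagonal'' second-order terms. Integrating by parts in $x$ and using the Dirichlet conditions $u=\varphi=\psi=w=0$ at $x=0,L$ (whence the corresponding velocities also vanish on the boundary), every boundary contribution disappears and each term becomes an exact time derivative: $\int_0^L \rho u_{tt}u_t\,dx=\frac{d}{dt}\frac12\int_0^L \rho u_t^2\,dx$, $-\int_0^L \alpha u_{xx}u_t\,dx=\frac{d}{dt}\frac12\int_0^L \alpha u_x^2\,dx$, and analogously the terms $\frac{d}{dt}\frac12\int_0^L \rho_1\varphi_t^2\,dx$, $\frac{d}{dt}\frac12\int_0^L b\psi_x^2\,dx$, and $\frac{d}{dt}\frac12\int_0^L(\rho_3 w_t^2+\delta w_x^2)\,dx$. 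The dissipative contributions $\mu u_t^2$ and $\gamma\varphi_t^2$ appear directly, while one integration by parts gives $-\int_0^L \kappa w_{xxt}w_t\,dx=\int_0^L \kappa w_{xt}^2\,dx$, producing the third dissipation term.

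Next I would recombine the coupling terms into exact derivatives. The elastic term $\lambda(\varphi-u)$ enters the first two equations with opposite signs, so $-\int_0^L\lambda(\varphi-u)u_t\,dx+\int_0^L\lambda(\varphi-u)\varphi_t\,dx=\frac{d}{dt}\frac12\int_0^L\lambda(\varphi-u)^2\,dx$. Similarly, after the integration by parts in the second equation, the shear term $K(\varphi_x+\psi)$ pairs with the third equation to yield $\int_0^L K(\varphi_x+\psi)(\varphi_{xt}+\psi_t)\,dx=\frac{d}{dt}\frac12\int_0^L K(\varphi_x+\psi)^2\,dx$. These reproduce exactly the mixed terms $\lambda(\varphi-u)^2$ and $K(\varphi_x+\psi)^2$ in \eqref{Energy funct}; note that because the Shear model carries no rotary inertia, $\psi$ is slaved to $\varphi$ through the third equation and $\psi_t$ is a legitimate multiplier.

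The step I expect to be the crux is the cancellation of the thermoelastic coupling. The second equation contributes $\int_0^L \beta w_{xt}\varphi_t\,dx$ and the fourth contributes $\int_0^L \beta\varphi_{xt}w_t\,dx$. Integrating the former by parts in $x$ and invoking $w_t(0,t)=w_t(L,t)=0$ gives $\int_0^L\beta w_{xt}\varphi_t\,dx=-\int_0^L\beta\varphi_{xt}w_t\,dx$, so the two $\beta$-contributions annihilate and no net energy is exchanged between the mechanical and thermal parts. Summing all contributions, the exact time derivatives reassemble precisely into $\frac{dE}{dt}$ with $E$ as in \eqref{Energy funct}, leaving only $-\mu\int_0^L u_t^2\,dx-\gamma\int_0^L\varphi_t^2\,dx-\kappa\int_0^L w_{xt}^2\,dx$, which is manifestly nonpositive, yielding \eqref{dissipation eq}. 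These manipulations are formal at the level of smooth solutions; for the weak solutions constructed later by the Faedo--Galerkin scheme the identity is first obtained at the Galerkin level and then passed to the limit.
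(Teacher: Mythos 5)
Your proof is correct and follows essentially the same route as the paper's: multiplying the four equations by $u_t$, $\varphi_t$, $\psi_t$, $w_t$, integrating by parts with the Dirichlet boundary conditions, and summing so that the $\lambda$-, $K$-, and $\beta$-coupling terms recombine or cancel. Your treatment is in fact more explicit than the paper's (which compresses the cancellations into four displayed identities and a one-line summation), and your closing remark on passing from regular to weak solutions matches the paper's appeal to density arguments.
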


\begin{proof}
We multiply the  equations in (\ref{main:system}) 
by $u_t$, $\varphi_t$, $\psi_t$, $w_t$, respectively,  
and integrate by parts to get
\begin{equation}
\label{eq energy1}	
\dfrac{1}{2}\dfrac{d}{dt}\left( \rho \int_{0}^{L}u_t^2dx\right)  
+\dfrac{1}{2}\dfrac{d}{dt}\left( \alpha \int_{0}^{L}u_x^2 dx\right)
+\dfrac{1}{2}\dfrac{d}{dt}\left(  \lambda \int_{0}^{L}u^2 dx\right)
-\lambda \int_{0}^{L}\varphi u_t dx +\mu \int_{0}^{L} u_t^2 dx=0,
\end{equation}
\begin{equation}
\begin{split}
\dfrac{1}{2}\dfrac{d}{dt}\left( \rho_1 \int_{0}^{L}\varphi_t^2dx\right)  
&+\dfrac{1}{2}\dfrac{d}{dt}\left( K \int_{0}^{L}\varphi_x^2 dx\right) 
+ K \int_{0}^{L}\psi \varphi_{xt} dx
+\dfrac{1}{2}\dfrac{d}{dt}\left(  \lambda \int_{0}^{L}\varphi^2 dx\right)\\
&-\lambda \int_{0}^{L}u\varphi_t  dx
+\gamma \int_{0}^{L} \varphi_t^2 dx
+\beta \int_{0}^{L} w_{xt}\varphi_t dx=0,
\end{split}
\end{equation}
\begin{equation}
\dfrac{1}{2}\dfrac{d}{dt}\left(b\int_{0}^{L}\psi^2_{x}dx\right) 
+ \dfrac{1}{2}\dfrac{d}{dt}\left(K \int_{0}^{L} \psi^2 dx \right) 
+ K\int_{0}^{L}\varphi_x \psi_t dx=0,
\end{equation}
\begin{equation}
\label{eq energy2}	
\dfrac{1}{2}\dfrac{d}{dt}\left(\rho_3\int_{0}^{L}w_t^2dx \right) 
+\kappa  \int_{0}^{L} w_{xt}^2  dx  + \beta \int_{0}^{L}
\varphi_{xt} w_t dx+\dfrac{1}{2}\dfrac{d}{dt}
\left(\delta\int_{0}^{L}w_x^2dx \right)=0.
\end{equation}
Adding  (\ref{eq energy1})--(\ref{eq energy2}), 
we obtain (\ref{dissipation eq}). These calculations 
are done for any regular solution. Nonetheless, 
the same result holds valid for weak solutions 
by using density arguments.
\end{proof}
 
\begin{theorem}[Aubin--Lions--Simon theorem, see p.~102 of \cite{Boyer}]
\label{theorem:2.1} 
Let $B_0\subset B_1 \subset B_2$ be three Banach spaces. 
We assume that the embedding of $B_1$ in $B_2$ is continuous and that
the embedding of $B_0$ in $B_1$ is compact. Let $p$ and $r$  such that 
$1\leqslant p, r \leqslant +\infty$. For $T>0$, we define
\begin{equation*}
W_{p,r}=\left\lbrace \chi \in L^p(]0,T[,B_0), 
\dfrac{d\chi}{dt} \in L^r(]0,T[,B_2) \right\rbrace.
\end{equation*}
\begin{enumerate}[i)]
\item If $p< +\infty$, the embedding of $W_{p,r}$ 
in $L^p(]0,T[,B_1)$ is compact.
\item If $p= +\infty$ and if $r>1$, 
the embedding of $W_{p,r}$ in $C([0,T],B_1)$ is compact.
\end{enumerate}
\end{theorem}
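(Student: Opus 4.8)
Since this is the classical Aubin--Lions--Simon compactness theorem (here quoted from \cite{Boyer}), I would follow its standard route. The plan is to reduce both assertions to a vector-valued compactness criterion --- the Fréchet--Kolmogorov theorem in $L^p(]0,T[,B_1)$ when $p<+\infty$ and the Arzelà--Ascoli theorem in $C([0,T],B_1)$ when $p=+\infty$ --- and to feed this criterion with two ingredients that exploit, respectively, the compactness of the embedding $B_0\hookrightarrow B_1$ and the control of the time derivative in $L^r(]0,T[,B_2)$.

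The first ingredient is an interpolation inequality of Ehrling type: for every $\varepsilon>0$ there exists a constant $C_\varepsilon>0$ with
\[ \lVert v\rVert_{B_1}\leqslant \varepsilon\lVert v\rVert_{B_0}+C_\varepsilon\lVert v\rVert_{B_2},\qquad v\in B_0. \]
I would establish this by contradiction: if it failed for some $\varepsilon_0$, there would be a sequence $(v_n)$ normalized by $\lVert v_n\rVert_{B_1}=1$ yet satisfying $\lVert v_n\rVert_{B_1}>\varepsilon_0\lVert v_n\rVert_{B_0}+n\lVert v_n\rVert_{B_2}$; using the compactness of $B_0\hookrightarrow B_1$ to extract a subsequence convergent in $B_1$, while the last inequality forces $\lVert v_n\rVert_{B_2}\to0$, one reaches a contradiction with the continuity of $B_1\hookrightarrow B_2$.

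The second ingredient is equicontinuity in time. For $\chi\in W_{p,r}$ one writes $\chi(t+h)-\chi(t)=\int_t^{t+h}\tfrac{d\chi}{ds}\,ds$ and estimates, by Hölder's inequality, $\lVert\chi(t+h)-\chi(t)\rVert_{B_2}\leqslant h^{1-1/r}\bigl(\int_t^{t+h}\lVert\tfrac{d\chi}{ds}\rVert_{B_2}^r\,ds\bigr)^{1/r}$, so the $B_2$-valued time-translations tend to $0$ in $L^p(]0,T[,B_2)$ as $h\to0$, uniformly over bounded subsets of $W_{p,r}$. Applying the Ehrling inequality to $v=\chi(t+h)-\chi(t)$, the $B_0$-term is absorbed (up to the factor $\varepsilon$) by the $W_{p,r}$-bound and the $B_2$-term is made small by taking $h$ small; hence the time-translations are also small in the $B_1$-norm, uniformly on bounded sets. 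For $p<+\infty$, this uniform $L^p(]0,T[,B_1)$-equicontinuity together with the relative compactness in $B_1$ of the locally averaged values $\int_{t_1}^{t_2}\chi\,dt$ (furnished by the compact embedding) lets the Fréchet--Kolmogorov criterion conclude part~(i).

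For part~(ii), when $p=+\infty$ and $r>1$, the derivative bound gives $\chi\in W^{1,r}(]0,T[,B_2)\hookrightarrow C^{0,1-1/r}([0,T],B_2)$, so the estimate above yields genuine Hölder equicontinuity of the family in $B_2$, upgraded to equicontinuity in $B_1$ through Ehrling exactly as before; since each $\chi(t)$ lies in a fixed bounded set of $B_0$, hence in a relatively compact subset of $B_1$, the Arzelà--Ascoli theorem gives relative compactness in $C([0,T],B_1)$. I expect the main obstacle to be precisely this $p=+\infty$ case: one must justify that the functions admit continuous $B_1$-valued representatives whose pointwise values share a common compact set, and then assemble a single convergent subsequence by a diagonal argument over a countable dense set of times --- the bookkeeping that makes Arzelà--Ascoli genuinely applicable in this Banach-space-valued setting.
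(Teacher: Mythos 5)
This statement is not proved in the paper at all: Theorem~\ref{theorem:2.1} is quoted verbatim, with a citation to p.~102 of \cite{Boyer}, as a background compactness tool used later in Step~5 of the well-posedness proof, so there is no in-paper argument to compare yours against. Your outline is the standard proof of the Aubin--Lions--Simon theorem and is essentially the argument of the cited reference: Ehrling's interpolation inequality obtained by contradiction from the compact embedding $B_0\hookrightarrow B_1$ and the continuous embedding $B_1\hookrightarrow B_2$; smallness of time translates in $B_2$ from the bound on $d\chi/dt$; then the vector-valued Fr\'echet--Kolmogorov criterion (with relative compactness of the averages $\int_{t_1}^{t_2}\chi\,dt$ supplied by the compact embedding) for part~(i), and Arzel\`a--Ascoli for part~(ii), where you correctly flag the need to produce continuous $B_1$-valued representatives with values in a common compact set.

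One step fails as literally written. Part~(i) allows $r=1$, and in that case your pointwise bound $\lVert\chi(t+h)-\chi(t)\rVert_{B_2}\leqslant h^{1-1/r}\bigl(\int_t^{t+h}\lVert d\chi/ds\rVert_{B_2}^r\,ds\bigr)^{1/r}$ is useless, since $h^{1-1/r}=1$; moreover, for a family merely bounded in $L^1(]0,T[,B_2)$ the inner integral need not be small uniformly (mass can concentrate). The uniform decay of translates should instead be taken in integrated form: writing $\lVert\chi(t+h)-\chi(t)\rVert_{B_2}$ as (at most) the convolution of $\lVert d\chi/dt\rVert_{B_2}$ with the indicator of an interval of length $h$, Young's inequality gives $\lVert\tau_h\chi-\chi\rVert_{L^p(]0,T-h[,B_2)}\leqslant h^{1/p}\,\lVert d\chi/dt\rVert_{L^1(]0,T[,B_2)}$, which tends to $0$ uniformly on bounded sets of $W_{p,r}$ precisely because $p<+\infty$. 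With this repair (and noting $\lVert d\chi/dt\rVert_{L^1}\leqslant T^{1-1/r}\lVert d\chi/dt\rVert_{L^r}$ for all $r\geqslant 1$), your sketch covers the full range of exponents in the statement.
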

 

\section{Global well‑posedness}
\label{sec:03}

In this section we prove the existence and uniqueness 
of regular and weak solutions
for system (\ref{main:system}) by using 
the Faedo--Galerkin method. 

\begin{theorem} 
\label{main results}
\ 
\begin{enumerate} [(i)]
\item If the initial data $u_0,\varphi_0,\psi_0,w_0$ $\in H^1_0(0,L)$ 
and $u_1,\varphi_1, w_1\in  L^2 (0,L)$, then problem (\ref{main:system}) 
has a unique weak solution satisfying
\begin{equation}
\label{weak:sol}		
u,\varphi,w  \in C([0,T], H^1_0 (0,L))
\cap C^1([0,T],L^2 (0,L)),\; \psi \in C([0,T], H^1_0 (0,L)). 
\end{equation}
\item If the initial data $u_0, \varphi_0,\psi_0,w_0$ 
$\in H^2(0,L)\cap H^1_0 (0,L)$ and  
$u_1,\varphi_1, w_1\in  H^1_0 (0,L)$, then 
the weak solution \eqref{weak:sol} has higher regularity:
\begin{equation}
\begin{split}
u,\varphi, w  \in C([0,T],H^2(0&,L)\cap H^1_0 (0,L))
\cap C^1([0,T], H^1_0 (0,L))\cap C^2([0,T], L^2 (0,L)),\\
&\psi  \in 	C([0,T],H^2(0,L)\cap H^1_0 (0,L)).
\end{split}
\end{equation}
\end{enumerate}
\end{theorem}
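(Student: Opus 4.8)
The plan is to carry out the classical Faedo--Galerkin scheme, producing exactly three a priori estimates that furnish the bounds needed to pass to the limit and to upgrade regularity. First I would fix an orthonormal basis $\{e_j\}_{j\geq 1}$ of $L^2(0,L)$ consisting of the Dirichlet eigenfunctions of $-\partial_{xx}$, which is simultaneously orthogonal in $H^1_0(0,L)$. I would seek approximate solutions $u^m=\sum_{j=1}^m a_j(t)e_j$, $\varphi^m=\sum_{j=1}^m b_j(t)e_j$, $w^m=\sum_{j=1}^m d_j(t)e_j$, with $\psi^m=\sum_{j=1}^m c_j(t)e_j$ fixed at each instant by projecting the \emph{elliptic} third equation of \eqref{main:system} onto $\mathrm{span}\{e_1,\dots,e_m\}$; since that equation carries no time derivative, it determines the $c_j$ as a linear function of the $b_j$. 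Projecting the remaining three equations onto the basis then yields a linear system of second-order ODEs for $(a_j,b_j,d_j)$ after elimination of the $c_j$, whose local solvability on some interval $[0,t_m)$ follows from the standard Carath\'eodory/Picard theory.

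The three estimates would be obtained as follows. The \emph{first estimate} reproduces the energy identity of the Lemma at the Galerkin level: multiplying the projected equations by the time derivatives of the coefficients and summing gives $E^m(t)\leq E^m(0)\leq C$ together with $\kappa\int_0^t\|w^m_{xt}\|^2\,ds\leq C$, uniformly in $m$. These bounds control $u^m_t,u^m_x,\varphi^m_t,\varphi^m_x,\psi^m_x,w^m_t,w^m_x$ in $L^\infty(0,T;L^2(0,L))$ and allow the local solution to be continued to all of $[0,T]$. The \emph{second estimate} is obtained by differentiating the projected equations in $t$ and repeating the energy computation on the differentiated system, yielding $L^\infty(0,T;L^2)$ bounds on $u^m_{tt},u^m_{xt},\varphi^m_{tt},\varphi^m_{xt},w^m_{tt},w^m_{xt}$ and an $L^2$ bound on $w^m_{xtt}$. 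Its initialisation requires evaluating $u^m_{tt}(0),\varphi^m_{tt}(0),w^m_{tt}(0)$ from the equations at $t=0$, and it is precisely here that the stronger hypotheses of part (ii) (data in $H^2\cap H^1_0$, velocities in $H^1_0$) are used to keep these bounds uniform in $m$. The \emph{third estimate} is elliptic in nature: solving each equation for its second spatial derivative and inserting the already-bounded quantities gives uniform bounds on $u^m_{xx},\varphi^m_{xx},w^m_{xx}$, and the third equation then bounds $\psi^m_{xx}$.

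With these bounds in hand I would extract weakly-$\ast$ convergent subsequences and apply the Aubin--Lions--Simon theorem (Theorem~\ref{theorem:2.1}) to gain the strong convergences (for instance of $\varphi^m$ in $C([0,T],L^2)$ and in $L^2(0,T;H^1_0)$) needed to identify every limit and, in particular, to pass to the limit in the elliptic relation defining $\psi$. Because the system is linear, passing to the limit in each term against a fixed basis test function is routine once the relevant convergence is available, and a density argument extends the weak formulation to all admissible test functions; continuity in time then lets me check that the initial data are attained. The regularity in (i) follows from the first estimate alone, whereas (ii) follows from the second and third. Uniqueness would be proved by the energy method applied to the difference of two solutions with identical data: the difference solves the homogeneous system with zero initial energy, so the dissipation identity \eqref{dissipation eq} forces $E\equiv 0$ and the difference vanishes; for merely weak solutions this requires the usual Ladyzhenskaya-type regularisation to justify using the solution itself as a test function. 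I expect the main obstacle to be the \emph{second estimate}: the coupled mixed-derivative terms $\beta w_{xt}$ and $\beta\varphi_{xt}$ must cancel in the differentiated energy exactly as they do in the Lemma, and one must simultaneously bound the initial values of the second time derivatives uniformly in $m$ — it is the interplay of this cancellation with the strong dissipation $-\kappa w_{xxt}$ that makes the estimate close.
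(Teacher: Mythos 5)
Your overall scheme --- eigenfunction Galerkin basis, a first energy estimate, a time-differentiated second estimate (with the correct observation that its initialization is where the $H^2\cap H^1_0$ hypotheses enter), Aubin--Lions--Simon compactness, and uniqueness by the energy identity applied to differences --- is exactly the paper's proof. The one place you depart from it is the third estimate, and that departure contains a genuine gap: you claim that ``solving each equation for its second spatial derivative'' bounds $w^m_{xx}$, but the fourth equation reads $\rho_3 w_{tt}-\delta w_{xx}+\beta\varphi_{xt}-\kappa w_{xxt}=0$, so what your first two estimates control is only the combination $\delta w^m_{xx}+\kappa w^m_{xxt}=\rho_3 w^m_{tt}+\beta\varphi^m_{xt}\in L^\infty(0,T;L^2(0,L))$; the term $w^m_{xxt}$ is not bounded by those estimates, so $w^m_{xx}$ cannot be isolated the way $u^m_{xx}$ can (where $\alpha u_{xx}=\rho u_{tt}-\lambda(\varphi-u)+\mu u_t$ does work) or $\varphi^m_{xx}$ and $\psi^m_{xx}$ can. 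As stated, the step fails for $w$. It can be repaired in two ways: the paper's route replaces $v_i$ by $-v_{ixx}$ in the Galerkin system and multiplies by the time derivatives of the coefficients, producing a third \emph{energy} identity whose dissipation term is $\kappa\|w^m_{xxt}\|^2$ and which yields simultaneously $u^m_{xx},\varphi^m_{xx},\psi^m_{xx},w^m_{xx}\in L^\infty(0,T;L^2(0,L))$ uniformly (and, as a bonus, $w^m_{xxt}\in L^2(0,T;L^2(0,L))$, which your elliptic reading never gives); alternatively, within your framework, set $v^m=\delta w^m+\kappa w^m_t$, note that $v^m_{xx}$ is bounded in $L^\infty(0,T;L^2(0,L))$, and integrate the linear ODE $\kappa w^m_t+\delta w^m=v^m$ in time, using $w^m(0)\to w_0$ in $H^2(0,L)$, to recover the bound on $w^m_{xx}$. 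Some such step must be supplied.

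A secondary looseness: for part (i) you assert that the regularity ``follows from the first estimate alone.'' The first estimate plus weak-$\ast$ limits only gives $u\in L^\infty(0,T;H^1_0(0,L))$ and $u_t\in L^\infty(0,T;L^2(0,L))$, hence weak continuity in time, not the claimed strong continuity $C([0,T],H^1_0(0,L))\cap C^1([0,T],L^2(0,L))$. The paper obtains (i) by approximating the $H^1_0\times L^2$ data by regular data and showing, via the same computation as the continuous-dependence estimate applied to differences of approximate solutions, that these form a Cauchy sequence in the energy norm $C([0,T],H^1_0(0,L))\times C([0,T],L^2(0,L))$; your write-up needs that (or an equivalent energy-continuity) argument to close part (i).
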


\begin{proof}
We prove the result in six steps.

Step 1: approximated problem. Suppose that $u_0$, $\varphi_0$, 
$\psi_0$, $w_0$ $\in H^2(0,L)\cap H^1_0(0,L)$, $u_1$,  
$\varphi_1$, $w_1$ $\in H^1_0(0,L)$. Let $\left\lbrace v_i\right\rbrace_{i\in \N}$ 
be a smooth orthonormal basis of $L^2(0,L)$, which is also orthogonal in 
$ H^2(0,L)\cap H^1_0(0,L)$, given by the eigenfunctions of 
$-\Delta v=\varsigma v$ with boundary condition $v(0)=v(L)=0$, such that
\begin{equation}
-\Delta v_i= \varsigma_i v_i.
\end{equation}
Here $\varsigma_i$ is the eigenvalue corresponding to $v_i$. 
For any $m \in \N$, denote by $V_m$ the finite-dimensional subspace
\begin{equation*}
\begin{split}
V_m&=span\left\lbrace v_1, v_2, \ldots, v_m\right\rbrace 
\subset H^2(0,L)\cap H^1_0(0,L).
\end{split} 
\end{equation*}
To construct the Galerkin approximation $(u^m, \varphi^m,\psi^m, w^m)$ 
of the solution, let us denote
\begin{equation*}
u^m(t)=\sum_{i=1}^{m}g_{im}(t)v_i(x),\; \varphi^m(t)
=\sum_{i=1}^{m}\hat{g}_{im}(t)v_i(x),
\end{equation*}
\begin{equation*}
\psi^m(t)=\sum_{i=1}^{m} f_{im}(t)v_i(x),\; w^m(t)
=\sum_{i=1}^{m} \hat{f}_{im}(t)v_i(x),
\end{equation*}
where functions $g_{im}$, $\hat{g}_{im}$, $f_{im}$, 
$\hat{f}_{im}\in C^2[0,T]$ are given by the solution 
of the approximated system
\begin{equation}
\label{appro:problem}	
\begin{cases}
\rho (u _{tt}^m( t),v_i) +\alpha(u_{x}^m (t) ,v_{ix})
-\lambda(\varphi^m(t)-u^m(t),v_i)
+\mu (u^m _{t}\left(t\right),v_i)=0,\\ 
\begin{split}
\rho _{1}(\varphi _{tt}^m( t),v_i) +K( \varphi _{x}^m (t)
&+\psi^m (t) ,v_{ix})+\lambda (\varphi^m(t)-u^m(t),v_i)\\
&+\gamma (\varphi^m _{t}\left(t\right),v_i)+\beta(w_{xt}^m(t),v_i)=0,
\end{split}\\
b(\psi^m _{x}\left(t\right),v_{ix}) +K\left(
\varphi^m _{x}(t)+\psi^m(t),v_i \right)=0, \\ 
\rho_3 (w^m_{tt}(t),v_i)+\delta(w^m_x(t),v_{ix})
+\beta(\varphi_{xt}^m(t), v_i)+\kappa(w^m_{xt}(t),v_{ix})=0,
\end{cases}
\end{equation}
with the initial conditions 
\begin{equation}
u^m(0)=u^m_0= \sum_{i=1}^{m}(u_0 ,v_i)v_i 
\xrightarrow{m \rightarrow\infty} u_0
\;\; \text{in}\; H^2(0,L)\cap H^1_0(0,L),
\end{equation}
\begin{equation}
\varphi^m(0)=\varphi^m_0= \sum_{i=1}^{m}(\varphi_0 ,v_i)
v_i \xrightarrow{m \rightarrow\infty} \varphi_0
\;\; \text{in}\; H^2(0,L)\cap H^1_0(0,L),
\end{equation}
\begin{equation}
\psi^m(0)=\psi^m_0= \sum_{i=1}^{m}(\psi_0 ,v_i)v_i 
\xrightarrow{m \rightarrow\infty} 
\psi_0\;\; \text{in}\; H^2(0,L)\cap H^1_0(0,L),
\end{equation}
\begin{equation}
w^m(0)=w^m_0= \sum_{i=1}^{m}(w_0 ,v_i)v_i 
\xrightarrow{m \rightarrow\infty} w_0\;\; 
\text{in}\; H^2(0,L)\cap H^1_0(0,L),
\end{equation}
\begin{equation}
u^m_t(0)=u^m_1= \sum_{i=1}^{m}(u_1 ,v_i)v_i 
\xrightarrow{m \rightarrow\infty} u_1\;\; 
\text{in}\;  H^1_0(0,L),
\end{equation}
\begin{equation}
\varphi^m_t(0)=\varphi^m_1= \sum_{i=1}^{m}(\varphi_1 ,v_i)v_i 
\xrightarrow{m \rightarrow\infty} \varphi_1\;\; 
\text{in}\;  H^1_0(0,L),
\end{equation}
\begin{equation}
\label{initial cond}	
w^m_t(0)=w^m_1= \sum_{i=1}^{m}(w_1 ,v_i)v_i 
\xrightarrow{m \rightarrow\infty} w_1\;\; 
\text{in}\;  H^1_0(0,L). 
\end{equation}
From the theory of ODEs, system (\ref{appro:problem})--(\ref{initial cond}) 
has a unique local solution $(u^m(t),\varphi^m(t), \psi^m(t), w^m(t))$
defined on $[0, T_m)$ with $0<T_m<T$. Our next step is to show that the 
local solution is extended to $[0,T]$ for any given $T>0$.

Step 2: first a priori estimate. Multiplying 
${\text{(\ref{appro:problem})}}_\text{1}$, 
${\text{(\ref{appro:problem})}}_\text{2}$, 
${\text{(\ref{appro:problem})}}_\text{3}$ 
and ${\text{(\ref{appro:problem})}}_\text{4}$ 
by $g'_{im}$,  $\hat{g}'_{im}$, $f'_{im}$ and $\hat{f}'_{im}$, respectively,
summing up over $i$ from $1$ to $m$,  taking the sum 
of the resulting equations and using
integration by parts, we obtain that
\begin{equation}
\label{estimate1 eq1}	
\begin{split}
&\dfrac{d}{dt} \left(\dfrac{\rho}{2} {\rVert u^m_t(t)\lVert}^2
+\dfrac{\alpha}{2} {\rVert u^m_x(t)\lVert}^2 +\dfrac{\lambda}{2} 
{\rVert \varphi^m(t)- u^m(t)\lVert}^2
+ \dfrac{\rho_1}{2} {\rVert\varphi^m_t(t)\lVert}^2  \right) \\
&+\dfrac{d}{dt}\left(\dfrac{K}{2} {\rVert\varphi^m_x(t)
+\psi^m(t)\lVert}^2+\dfrac{b}{2}  {\lVert\psi^m_x(t)\rVert}^2
+\dfrac{\rho_3}{2} {\lVert w_t^m(t)\rVert}^2+\dfrac{\delta}{2}  
{\lVert w_x^m(t)\rVert}^2 \right)\\
&+ \mu {\lVert u_t^m(t)\rVert}^2
+\gamma {\lVert \varphi_t^m(t)\rVert}^2
+\kappa {\lVert w_{xt}^m(t)\rVert}^2=0.
\end{split}
\end{equation}
Integrating (\ref{estimate1 eq1}) over $(0,t)$, we find
\begin{equation}
\label{estimate1:eq2}	
\mathcal{E}^m_1(t) +\mu\int_{0}^{t} {\lVert u_t^m(s)\rVert}^2 ds
+\gamma \int_{0}^{t}{\lVert \varphi_t^m(s)\rVert}^2ds
+\kappa \int_{0}^{t}{\lVert w_{xt}^m(s)\rVert}^2 ds
=\mathcal{E}^m_1(0),
\end{equation}
where 
\begin{equation}
\label{estimate1:eq3}	
\begin{split}
\mathcal{E}^m_1(t)=&\dfrac{1}{2}\left(\rho{\rVert u^m_t(t)\lVert}^2
+\alpha{\rVert u^m_x(t)\lVert}^2+\lambda {\rVert \varphi^m(t)
-u^m(t)\lVert}^2 +\rho_1{\rVert\varphi^m_t(t)\lVert}^2 \right)\\
&+\dfrac{1}{2}\left( K {\rVert\varphi_x^m(t)+\psi^m(t)\lVert}^2
+b{\rVert \psi^m_x(t)\lVert }^2+ \rho_3 {\lVert w_t^m(t)\rVert}^2
+\delta{\lVert w_x^m(t)\rVert}^2\right).
\end{split} 
\end{equation}
From (\ref{estimate1:eq2}), we deduce that there exists 
$C$, independent of $m$, such that 
\begin{equation}
\label{estimate1:eq4}	
\mathcal{E}^m_1(t)\leqslant \mathcal{E}^m_1(0)
\leqslant C, \; t\geqslant 0.
\end{equation}
Consequently, we have
\begin{equation}
\label{estimate1:eq5}	
\begin{split}
&{\rVert u^m_t(t)\lVert}^2+{\rVert u^m_x(t)\lVert}^2
+ {\rVert \varphi^m(t)-u^m(t)\lVert}^2+ {\rVert\varphi^m_t(t)\lVert}^2\\
&+ {\rVert\varphi_x^m(t)+\psi^m(t)\lVert}^2+{\rVert \psi^m_x(t)\lVert }^2
+ {\lVert w_t^m(t)\rVert}^2+{\lVert w_x^m(t)\rVert}^2\leqslant C.
\end{split} 
\end{equation}
The estimate \eqref{estimate1:eq5} implies 
that the solution $(u^m,\varphi^m,\psi^m,w^m)$ 
exists globally in $[0,\infty)$ and, for any $m\in \N$,
\begin{equation}
\label{estimate1:eq6}	
u^m,\;\varphi^m,\; \psi^m ,\; w^m\;\; 
\text{are bounded in }\; L^{\infty} (0,T; H^{1}_0 (0,L)), 
\end{equation}
\begin{equation}
\label{estimate1:eq7}	
u^m_t,\;\varphi^m_t,\;w^m_t\;\; \text{are bounded in }\; 
L^{\infty} (0,T; L^{2} (0,L)). 
\end{equation}

Step 3: second a priori estimate. Taking the derivative of the 
approximate equations in (\ref{appro:problem}) with respect to $t$, we get 
\begin{equation}
\label{appro:problem2}	
\begin{cases}
\rho(u _{ttt}^m( t),v_i)+\alpha(u^m_{xt}(t),v_{ix}) 
-\lambda( \varphi _{t}^m (t)-u^m_t (t) ,v_{i})
+\mu (u^m _{tt}\left(t\right),v_i) =0,\\
\begin{split}
\rho _{1}(\varphi _{ttt}^m( t),v_i) 
+K( \varphi _{xt}^m (t)&+\psi^m_t (t) ,v_{ix})
+\lambda( \varphi _{t}^m (t)-u^m_t (t) ,v_{i})\\
&+\gamma (\varphi^m _{tt}\left(t\right),v_i)+\beta(w^m_{xtt},v_i) =0,
\end{split}\\ 
b(\psi^m _{xt}\left(t\right),v_{ix}) +K\left(
\varphi^m _{xt}(t)+\psi^m_t(t),v_i \right)=0, \\ 
\rho_3(w_{ttt}^m(t),v_i)+\delta(w_{xt}^m(t),v_{ix})
+\beta(\varphi_{xtt}^m(t),v_i)+\kappa(w^m_{xtt},v_{ix})=0.
\end{cases}
\end{equation}
Multiplying  ${\text{(\ref{appro:problem2})}}_\text{1}$, 
${\text{(\ref{appro:problem2})}}_\text{2}$, 
${\text{(\ref{appro:problem2})}}_\text{3}$ 
and ${\text{(\ref{appro:problem2})}}_\text{4}$  
by $g''_{im}$,  $\hat{g}''_{im}$, $ f''_{im}$ 
and $ \hat{f}''_{im}$, respectively, 
and summing up  over $i$ from $1$ to $m$, it follows that 
\begin{equation}
\label{estimate2:eq1}	
\dfrac{1}{2}\dfrac{d}{dt} \left( \rho {\rVert u^m_{tt}(t)\lVert}^2\right)
+\dfrac{1}{2}\dfrac{d}{dt} \left( \alpha {\rVert u^m_{tx}(t)\lVert}^2\right)
-\lambda\int_{0}^{L}(\varphi^m_t(t)-u^m_t(t))u^m_{tt}(t) dx
+\mu{\rVert u^m_{tt}(t)\lVert}^2=0,
\end{equation} 
\begin{equation}
\begin{split}
\dfrac{1}{2}\dfrac{d}{dt}
& \left( \rho_1 {\rVert\varphi^m_{tt}(t)\lVert}^2\right) 
+K \int_{0}^{L}(\varphi^m_{xt}(t)+\psi^m_t(t) )\varphi^m_{xtt}(t) dx 
+\gamma{\rVert\varphi^m_{tt}(t)\lVert}^2 \\
&+\lambda\int_{0}^{L}(\varphi^m_t(t)-u^m_t(t))\varphi^m_{tt}(t) dx
+\beta\int_{0}^{L}w^m_{xtt}(t)\varphi^m_{tt}(t)dx=0,
\end{split}
\end{equation}
\begin{equation}
\begin{split}
\dfrac{1}{2}\dfrac{d}{dt} \left(b {\lVert\psi^m_{xt}(t)\rVert}^2\right)
+K \int_{0}^{L}(\varphi^m_{xt}(t)+\psi^m_t(t) )\psi^m_{tt}(t) dx=0,
\end{split}
\end{equation}
\begin{equation}
\label{estimate2:eq2}	
\dfrac{1}{2}\dfrac{d}{dt} \left(\rho_3 {\rVert w^m_{tt}(t)\lVert}^2\right)
+\dfrac{1}{2}\dfrac{d}{dt} \left(\delta {\rVert w^m_{xt}(t)\lVert}^2\right)
+\beta \int_{0}^{L}\varphi^m_{xt}(t)w_{tt}^m(t) dx
+\kappa{\rVert w^m_{xtt}(t)\lVert}^2=0.
\end{equation}
Adding up (\ref{estimate2:eq1})--(\ref{estimate2:eq2}) and using
integration by parts, we obtain that
\begin{equation}
\label{estimate2:the:sum}	
\begin{split}
&\dfrac{d}{dt}\left(\dfrac{\rho}{2} {\rVert u^m_{tt}(t)\lVert}^2
+\dfrac{\alpha}{2} {\rVert u^m_{xt}(t)\lVert}^2 
+\dfrac{\lambda}{2} {\rVert \varphi_t^m(t)- u_t^m(t)\lVert}^2
+ \dfrac{\rho_1}{2} {\rVert\varphi^m_{tt}(t)\lVert}^2  \right) \\
&+\dfrac{d}{dt}\left(\dfrac{K}{2} {\rVert\varphi^m_{xt}(t)
+\psi_t^m(t)\lVert}^2+\dfrac{b}{2}  {\lVert\psi^m_{xt}(t)\rVert}^2
+\dfrac{\rho_3}{2} {\lVert w_{tt}^m(t)\rVert}^2
+\dfrac{\delta}{2}{\lVert w_{xt}^m(t)\rVert}^2 \right)\\
&+\mu {\lVert u_{tt}^m(t)\rVert}^2+\gamma {\lVert 
\varphi_{tt}^m(t)\rVert}^2+\kappa {\lVert w_{xtt}^m(t)\rVert}^2=0.
\end{split}
\end{equation}
Now, integrating (\ref{estimate2:the:sum}) over $(0,t)$, yields
\begin{equation}
\mathcal{E}^m_2(t) +\mu \int_{0}^{t}{\lVert u_{tt}^m(s)\rVert}^2ds
+\gamma\int_{0}^{t} {\lVert \varphi_{tt}^m(s)\rVert}^2ds
+\kappa \int_{0}^{t}{\lVert w_{xtt}^m(s)\rVert}^2ds
=\mathcal{E}^m_2(0),
\end{equation}
where 
\begin{equation}
\begin{split}
\mathcal{E}^m_2(t)
=&\dfrac{1}{2}\left(\rho{\rVert u^m_{tt}(t)\lVert}^2
+\alpha{\rVert u^m_{xt}(t)\lVert}^2+\lambda {\rVert \varphi_t^m(t)
-u_t^m(t)\lVert}^2 +\rho_1{\rVert\varphi^m_{tt}(t)\lVert}^2 \right)\\
&+\dfrac{1}{2}\left( K {\rVert\varphi_{xt}^m(t)+\psi_t^m(t)\lVert}^2
+b{\rVert \psi^m_{xt}(t)\lVert }^2+ \rho_3 {\lVert w_{tt}^m(t)\rVert}^2
+\delta{\lVert w_{xt}^m(t)\rVert}^2\right).
\end{split} 
\end{equation}
Thus, there exists $C$ independent of $m$ such that
\begin{equation}
\mathcal{E}^m_2(t)\leqslant \mathcal{E}^m_2(0)
\leqslant C, \; t\geqslant 0,
\end{equation}
accordingly
\begin{equation}
\begin{split}
&{\rVert u^m_{tt}(t)\lVert}^2+{\rVert u^m_{xt}(t)\lVert}^2
+ {\rVert \varphi_t^m(t)-u_t^m(t)\lVert}^2+ {\rVert\varphi^m_{tt}(t)\lVert}^2\\
&+ {\rVert\varphi_{xt}^m(t)+\psi_t^m(t)\lVert}^2+{\rVert \psi^m_{xt}(t)\lVert }^2
+ {\lVert w_{tt}^m(t)\rVert}^2+{\lVert w_{xt}^m(t)\rVert}^2
\leqslant C,
\end{split} 
\end{equation}
and, for any $m\in \N$, we have
\begin{equation}
\label{estimate2:eq4}	
u_t^m,\;\varphi^m_t,\; \psi^m_t,\; w^m_t 
\;\; \text{bounded in }\; 
L^{\infty}(0,T; H^{1}_0 (0,L)), 
\end{equation}
\begin{equation}
\label{estimate2:eq5}	
u^m_{tt},\;\varphi^m_{tt},\; w^m_{tt}
\;\; \text{bounded in }\; 
L^{\infty} (0,T; L^{2} (0,L)). 
\end{equation}

Step 4: third a priori estimate. Replacing $v_i$ by $-v_{ixx}$ 
in (\ref{appro:problem}) and multiplying the resulting equations 
by $g'_{im}$, $\hat{g}'_{im}$, $f'_{im}$ and $\hat{f}'_{im}$, 
respectively, summing over $i$ from $1$ to $m$ and using 
integration by parts, we obtain that
\begin{equation}
\label{estimate3:eq1}	
\dfrac{1}{2}\dfrac{d}{dt} \left( \rho {\rVert u^m_{xt}(t)\lVert}^2\right)
+\dfrac{1}{2}\dfrac{d}{dt} \left( \alpha {\rVert u^m_{xx}(t)\lVert}^2\right)
-\lambda\int_{0}^{L}(\varphi^m_x(t)-u^m_x(t))u^m_{xt}(t) dx
+\mu{\rVert u^m_{xt}(t)\lVert}^2=0,
\end{equation}
\begin{equation}
\begin{split}
\dfrac{1}{2}\dfrac{d}{dt}
& \left( \rho_1 {\rVert\varphi^m_{xt}(t)\lVert}^2\right) 
+K \int_{0}^{L}(\varphi^m_{xx}(t)+\psi^m_x(t) )\varphi^m_{xxt}(t) dx 
+\gamma{\rVert\varphi^m_{xt}(t)\lVert}^2 \\
&+\lambda\int_{0}^{L}(\varphi^m_x(t)-u^m_x(t))\varphi^m_{xt}(t) dx
-\beta\int_{0}^{L}w^m_{xt}(t)\varphi^m_{xxt}(t)dx=0,
\end{split}
\end{equation}
\begin{equation}
\begin{split}
\dfrac{1}{2}\dfrac{d}{dt} \left(b {\lVert\psi^m_{xx}(t)\rVert}^2\right)
-K \int_{0}^{L}(\varphi^m_{x}(t)+\psi^m(t) )\psi^m_{xxt}(t) dx=0,
\end{split}
\end{equation}
\begin{equation}
\label{estimate3:eq2}	
\dfrac{1}{2}\dfrac{d}{dt} \left(\rho_3 {\rVert w^m_{xt}(t)\lVert}^2\right)
+\dfrac{1}{2}\dfrac{d}{dt} \left(\delta {\rVert w^m_{xx}(t)\lVert}^2\right)
+\beta \int_{0}^{L}\varphi^m_{xxt}(t)w_{xt}^m(t) dx
+\kappa{\rVert w^m_{xxt}(t)\lVert}^2=0.
\end{equation}
Taking the sum of (\ref{estimate3:eq1})--(\ref{estimate3:eq2}) 
and integrating over $(0,t)$, one has
\begin{equation}
\begin{split}
\mathcal{E}^m_3(t)+\mu \int_{0}^{t}{\lVert u_{xt}^m(s)\rVert}^2ds
+\gamma\int_{0}^{t} {\lVert \varphi_{xt}^m(s)\rVert}^2ds
+\kappa \int_{0}^{t}{\lVert w_{xxt}^m(s)\rVert}^2ds 
= \mathcal{E}^m_3(0),
\end{split}
\end{equation}
where 
\begin{equation}
\begin{split}
\mathcal{E}^m_3(t)
=&\dfrac{1}{2}\left(\rho{\rVert u^m_{xt}(t)\lVert}^2
+\alpha{\rVert u^m_{xx}(t)\lVert}^2+\lambda {\rVert \varphi_x^m(t)
-u_x^m(t)\lVert}^2 +\rho_1{\rVert\varphi^m_{xt}(t)\lVert}^2 \right)\\
&+\dfrac{1}{2}\left( K {\rVert\varphi_{xx}^m(t)+\psi_x^m(t)\lVert}^2
+b{\rVert \psi^m_{xx}(t)\lVert }^2+ \rho_3 {\lVert w_{xt}^m(t)\rVert}^2
+\delta{\lVert w_{xx}^m(t)\rVert}^2\right).
\end{split} 
\end{equation}
Then, there exists $C$, independent of $m$, such that
\begin{equation*}
\mathcal{E}^m_3(t)\leqslant \mathcal{E}^m_3(0)
\leqslant C, \; t\geqslant 0,
\end{equation*}
which entails that
\begin{equation}
\label{estimate3:eq3}	
\begin{split}
&{\rVert u^m_{xt}(t)\lVert}^2+{\rVert u^m_{xx}(t)\lVert}^2
+ {\rVert \varphi_x^m(t)-u_x^m(t)\lVert}^2+ {\rVert\varphi^m_{xt}(t)\lVert}^2\\
&+ {\rVert\varphi_{xx}^m(t)+\psi_x^m(t)\lVert}^2+{\rVert \psi^m_{xx}(t)\lVert}^2
+ {\lVert w_{xt}^m(t)\rVert}^2+{\lVert w_{xx}^m(t)\rVert}^2\leqslant C.
\end{split} 
\end{equation}
For all $m\in \N$, (\ref{estimate3:eq3}) implies that 
\begin{equation}
\label{estimate3:eq5}	
u^m,\;\varphi^m,\; \psi^m,\; w^m \;\; \text{are bounded in }
\; L^{\infty} (0,T;H^{2}(0,L) \cap H^{1}_0 (0,L)).
\end{equation}

Step 5: passage to the limit. Thanks to   
(\ref{estimate2:eq4}), (\ref{estimate2:eq5}) and (\ref{estimate3:eq5}),  
and passing to a subsequence, if necessary, we have
\begin{equation}
\label{weak:limit}	
\begin{cases}
u^m\rightharpoonup^\ast u \;\; \text{ in }\; 
L^{\infty} (0,T; H^{2}(0,L) \cap H^{1}_0 (0,L)),\\
u^m_t\rightharpoonup^\ast u_t \;\; \text{ in }\; 
L^{\infty} (0,T; H^{1}_0 (0,L)),\\ u^m_{tt}
\rightharpoonup^\ast u_{tt} \;\; 
\text{ in }\; L^{\infty} (0,T; L^{2} (0,L)),
\end{cases}
\end{equation}
\begin{equation}
\begin{cases}
\varphi^m\rightharpoonup^\ast \varphi \;\; 
\text{ in }\; L^{\infty} (0,T; H^{2}(0,L) \cap H^{1}_0 (0,L)),\\
\varphi^m_t\rightharpoonup^\ast \varphi_t \;\; 
\text{ in }\; L^{\infty} (0,T;  H^{1}_0 (0,L)),\\
\varphi^m_{tt}\rightharpoonup^\ast \varphi_{tt} \;\; 
\text{ in }\; L^{\infty} (0,T; L^{2} (0,L)),
\end{cases}
\end{equation}
\begin{equation}
\begin{cases}
\psi^m\rightharpoonup^\ast \psi \;\; 
\text{ in }\; L^{\infty} (0,T; H^{2}(0,L) \cap H^{1}_0 (0,L)),\\
\psi^m_t\rightharpoonup^\ast \psi_t \;\; 
\text{ in }\; L^{\infty} (0,T;  H^{1}_0 (0,L)),
\end{cases}
\end{equation}
\begin{equation}
\begin{cases}
w^m\rightharpoonup^\ast w \;\; \text{ in }\; 
L^{\infty} (0,T; H^{2}(0,L) \cap H^{1}_0 (0,L)),\\
w^m_t\rightharpoonup^\ast w_t \;\; \text{ in }\; 
L^{\infty} (0,T; H^{1}_0 (0,L)),\\ 
w^m_{tt}\rightharpoonup^\ast w_{tt} \;\; 
\text{ in }\; L^{\infty} (0,T;  L^{2} (0,L)).
\end{cases}
\end{equation}
From the above limits, we conclude that
$(u,u_t,\varphi,\varphi_t,\psi,w,w_t)$  
is a strong weak solution with higher regularity, satisfying
\begin{equation*}
u,\varphi,\psi,w \in L^{\infty} (0,T; H^{2}(0,L) 
\cap H^{1}_0 (0,L)),
\quad u_t,\varphi_t, w_t  
\in L^{\infty} (0,T; H^{1}_0 (0,L)).
\end{equation*}
The embedding $H^{2}(0,L)\cap H^{1}_0(0,L)$ in $ H^{1}_0(0,L)$ is compact. 
Note that if we let $B_1=B_2=H^{1}_0 (0,L)$ and $B_0=H^{2}(0,L)\cap H^{1}_0(0,L)$ 
in Aubin--Lions--Simon Theorem~\ref{theorem:2.1}, then we get that the embedding 
of $W_{\infty,\infty}$ in $C(]0,T[,H^{1}_0 (0,L))$ is compact, where
\begin{equation*}
W_{\infty,\infty}=\left\lbrace u^m :\;  
u^m \in L^{\infty}(]0,T[,H^{2}(0,L)\cap H^{1}_0(0,L)), 
u^m_t \in L^{\infty}(]0,T[,H^{1}_0 (0,L)) \right\rbrace.
\end{equation*}
Now, from (\ref{weak:limit}), we deduce that $u^m$ is bounded 
in $W_{\infty,\infty}$ and, therefore, we can
extract a subsequence $(u^\nu)$ of $(u^m)$ such that
\begin{equation}
\label{step5:eq1}	
u^ \nu \longrightarrow u \;\; \text{ in }\; 
C ([0,T], H^{1}_0 (0,L)).
\end{equation}
Similarly, we obtain 
\begin{equation*}
\psi^ \nu \longrightarrow \psi \;\; \text{ in }\; C ([0,T], H^{1}_0 (0,L)),
\end{equation*}
\begin{equation*}
\varphi^ \nu \longrightarrow \varphi \;\; \text{ in }\; C ([0,T], H^{1}_0 (0,L)),
\end{equation*}
\begin{equation*}
w^ \nu \longrightarrow w \;\; \text{ in }\; C ([0,T], H^{1}_0 (0,L)).
\end{equation*}
Since  the embedding $H^{1}_0 (0,L)\hookrightarrow L^{2}(0,L)$ is compact, 
going back again to the compactness Theorem~\ref{theorem:2.1} 
with $B_0=H^{1}_0 (0,L)$, $B_1=B_2=L^{2}(0,L)$ and $\chi=u^m_t$, gives  
\begin{equation}
\label{step5:eq2}	
u^\nu_t \longrightarrow u_t \;\; 
\text{ in }\; C ([0,T], L^{2} (0,L)),
\end{equation}
where here $(u^\nu_t)$ is a subsequence of $(u^m_t)$. Similarly, we find
\begin{equation*}
\varphi^\nu_t \longrightarrow \varphi_t \;\; 
\text{ in }\; C ([0,T], L^{2} (0,L)),
\end{equation*}
\begin{equation*}
w^\nu_t \longrightarrow w_t \;\; 
\text{ in }\; C ([0,T], L^{2} (0,L)).
\end{equation*}
On the other hand, note that 
\begin{equation*}
-u^\nu_{xx}=\varsigma u^\nu\; \text{ and }\; 
C ([0,T], H^{1}_0 (0,L)) \subset C ([0,T], L^{2} (0,L)).
\end{equation*}
Then, from (\ref{step5:eq1}), we infer that 
\begin{equation*}
u^\nu \longrightarrow u \;\; \text{ in }\; 
C ([0,T], H^{2}(0,L)\cap H^{1}_0 (0,L)).
\end{equation*}
Now, making use of (\ref{step5:eq1}) and (\ref{step5:eq2}), 
respectively, with the dominated convergence theorem 
(differentiating under the integral sign), gives that
\begin{equation*}
\lVert u^\nu_t-u_t  \rVert_{C ([0,T], H^{1}_0 (0,L))}
\xrightarrow{\nu \rightarrow\infty} 0,
\end{equation*}
\begin{equation*}
\lVert u^\nu_{tt}-u_{tt}  \rVert_{C ([0,T], L^{2} (0,L))}
\xrightarrow{\nu \rightarrow\infty} 0,
\end{equation*}
which implies that
\begin{equation*}
u^\nu_t \longrightarrow u_t \;\; 
\text{ in }\; C ([0,T], H^{1}_0 (0,L)),
\end{equation*}
\begin{equation*}
u^\nu_{tt} \longrightarrow u_{tt} \;\; 
\text{ in }\; C ([0,T], L^{2} (0,L)).
\end{equation*}
The same arguments are used to determine the limits of  
$\varphi^\nu,\varphi^\nu_t,\varphi^\nu_{tt},
\psi^\nu,w^\nu,w^\nu_{t}$ and $w^\nu_{tt}$.
With these limits, we can pass to the limit of the terms of the 
approximate equations in (\ref{appro:problem})  
to get a strong weak solution to problem (\ref{main:system}).

Step 6: continuous dependence and uniqueness.
Let $(u,u_t,\varphi,\varphi_t,\psi,w,w_t)$ and 
$(\tilde{u},\tilde{u}_t,\tilde{\varphi},\tilde{\varphi}_t,
\tilde{\psi},\tilde{w},\tilde{w}_t)$ be strong solutions 
of problem (\ref{main:system}). Then,
\begin{equation*}
(U,U_t,\Lambda,\Lambda_t,X,\Theta,\Theta_t)
=(u-\tilde{u},u_t-\tilde{u}_t,\varphi-\tilde{\varphi},
\varphi_t-\tilde{\varphi}_t,\psi-\tilde{\psi},w-\tilde{w},w_t-\tilde{w}_t)
\end{equation*} 
satisfies
\begin{equation}
\label{uniqueness:eq1}	
\rho _{}U_{tt}\left( x,t\right) -\alpha U_{xx}\left( x,t\right) 
-\lambda(\Lambda-U)\left( x,t\right)+\mu U_t \left( x,t\right) =0,
\end{equation}
\begin{equation}
\label{uniqueness:eq2}	
\rho _{1}\Lambda _{tt}\left( x,t\right) 
-K\left( \Lambda _{x}+X \right)_{x}\left( x,t\right) 
+\lambda(\Lambda-U)\left( x,t\right)+\gamma\Lambda_t(x,t)
+\beta\Theta_{xt}(x,t)=0,
\end{equation}
\begin{equation}
\label{uniqueness:eq3}	
-bX _{xx}\left( x,t\right) +K\left(\Lambda_{x}
+X \right) \left( x,t\right)=0, 
\end{equation}
\begin{equation}
\label{uniqueness:eq4}	
\rho_3 \Theta_{tt}(x,t)-\delta \Theta_{xx}(x,t)
+\beta \Lambda_{xt}(x,t)-\kappa \Theta_{xxt}(x,t) =0,
\end{equation}
with the initial data 
\begin{equation*}
\begin{split}
U(0)&=u(0)-\tilde{u}(0),\\
U_t(0)&=u_t(0)-\tilde{u}_t(0),\\
\Lambda(0)&=\varphi(0)-\tilde{\varphi}(0),\\
\Lambda_t(0)&=\varphi_t(0)-\tilde{\varphi}_t(0),\\
X(0)&=\psi(0)-\tilde{\psi}(0),\\
\Theta(0)&=w(0)-\tilde{w}(0),\\
\Theta_t(0)&=w_t(0)-\tilde{w}_t(0).
\end{split}
\end{equation*}
Repeating exactly the same arguments used to obtain
the estimate (\ref{dissipation eq}), we get
\begin{equation}
\label{dissipation:eq2}	
\dfrac{d\tilde{E}(t)}{dt}=-\mu{\rVert U_t\lVert }^2
-\gamma{\rVert \Lambda_t\lVert }^2
-\kappa{\rVert \Theta_{xt}\lVert }^2.
\end{equation}
Integrating (\ref{dissipation:eq2}) 
over $(0,t)$, we conclude that   
\begin{equation*}
\tilde{E}(t)\leqslant C_T\tilde{E}(0),\; t\in [0,T],
\end{equation*}
for some constant $C_T > 0$, which proves 
the continuous dependence of the solutions 
on the initial data. In particular, the stronger 
weak solution is unique.

To demonstrate the existence of weak solutions, 
let us consider the initial data $u_0,\varphi_0,\psi_0,w_0$ $\in H^1_0(0,L)$ 
and $u_1,\varphi_1, w_1\in  L^2 (0,L)$ in the approximate 
problem (\ref{appro:problem}). Then, by density, we have 
\begin{equation}
\label{initial_data_week_solution}	
u^m_0,\varphi^m_0,\psi^m_0,w^m_0\rightarrow u_0,\varphi_0,\psi_0,w_0\;\; 
\text{in}\;  H^1_0(0,L),\; u^m_1,\varphi^m_1,w^m_1
\rightarrow u_1,\varphi_1,w_1\;\; \text{in}\;  L^2(0,L).
\end{equation}
Repeating the same steps used in the first estimate, following the same procedure 
already used in the uniqueness of strong solutions for $(U^m,U_t^m,\Lambda^m,
\Lambda_t^m,X^m,\Theta^m,\Theta_t^m)=(u^m-\tilde{u}^m,u_t^m-\tilde{u}_t^m,\varphi^m-\tilde{\varphi}^m,
\varphi_t^m-\tilde{\varphi}_t^m,\psi^m-\tilde{\psi}^m,w^m-\tilde{w}^m,w_t^m-\tilde{w}_t^m)$ 
and taking into account the convergences (\ref{initial_data_week_solution}), 
we deduce that there exists $u$, $\varphi$, $\psi$, and $w$ such that
\begin{equation*}
u^m \longrightarrow u \;\; \text{ in }\; 
C ([0,T], H^{1}_0 (0,L)),
\end{equation*}
\begin{equation*}
\psi^m \longrightarrow \psi \;\; \text{ in }\; C ([0,T], H^{1}_0 (0,L)),
\end{equation*}
\begin{equation*}
\varphi^m \longrightarrow \varphi \;\; \text{ in }\; C ([0,T], H^{1}_0 (0,L)),
\end{equation*}
\begin{equation*}
w^m \longrightarrow w \;\; \text{ in }\; C ([0,T], H^{1}_0 (0,L)),
\end{equation*}
\begin{equation*}	
u^m_t \longrightarrow u_t \;\; 
\text{ in }\; C ([0,T], L^{2} (0,L)),
\end{equation*}
\begin{equation*}
\varphi^m_t \longrightarrow \varphi_t \;\; 
\text{ in }\; C ([0,T], L^{2} (0,L)),
\end{equation*}
\begin{equation*}
w^m_t \longrightarrow w_t \;\; 
\text{ in }\; C ([0,T], L^{2} (0,L)).
\end{equation*}
The uniqueness is obtained making use of the well-known regularization 
procedure, as presented in \cite[Chapter 3, Section 8.2.2]{Lions}.
\end{proof} 


\section{Exponential stability}
\label{sec:04}

The main result of this section is to prove the following 
stability theorem for regular solution, since the same occurs 
for weak solution using standard density arguments.

\begin{theorem}
\label{theorem:2}
With the regularity stated in Theorem~\ref{main results},  
the energy $E(t)$ decays exponentially as time approaches infinity, 
that is, there exists two positive constants, 
$\sigma_0$ and $\sigma_1$, such that
\begin{equation}
\label{energy:decay}	
E(t)\leqslant\sigma_0 e^{-\sigma_1 t}
\quad \forall t\geqslant 0.
\end{equation}
\end{theorem}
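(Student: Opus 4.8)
The plan is to construct a Lyapunov functional $\mathcal{L}(t)$ that is equivalent to the energy $E(t)$ of \eqref{Energy funct} and satisfies a differential inequality $\mathcal{L}'(t)\le -\sigma\,\mathcal{L}(t)$; Gronwall's inequality then yields \eqref{energy:decay}. The dissipation law \eqref{dissipation eq} gives direct control only of $\|u_t\|^2$, $\|\varphi_t\|^2$ and $\|w_{xt}\|^2$, so the whole difficulty is to recover the potential quantities $\|u_x\|^2$, $\|\varphi-u\|^2$, $\|\varphi_x+\psi\|^2$, $\|\psi_x\|^2$, $\|w_x\|^2$ (together with the undissipated kinetic term $\|w_t\|^2$) through auxiliary multipliers.

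First I would introduce the three multiplier functionals
\begin{equation*}
J_1(t)=\rho\int_0^L u\,u_t\,dx,\quad
J_2(t)=\rho_1\int_0^L \varphi\,\varphi_t\,dx,\quad
J_3(t)=\rho_3\int_0^L w\,w_t\,dx,
\end{equation*}
and differentiate them along solutions of \eqref{main:system}. Multiplying the first equation by $u$ gives $J_1'=\rho\|u_t\|^2-\alpha\|u_x\|^2+\lambda(u,\varphi-u)-\mu(u,u_t)$, and multiplying the second by $\varphi$ produces, after integration by parts, the term $-K(\varphi_x,\varphi_x+\psi)$. The key structural observation is that the elliptic third equation $K(\varphi_x+\psi)=b\psi_{xx}$ lets me rewrite $(\varphi_x,\varphi_x+\psi)=\|\varphi_x+\psi\|^2+\tfrac{b}{K}\|\psi_x\|^2$, so that $J_2'$ yields the two negative terms $-K\|\varphi_x+\psi\|^2-b\|\psi_x\|^2$ at once — this is where the shear (no rotary inertia) structure is exploited. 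Adding $J_1'+J_2'$, the coupling contributions $\lambda(u,\varphi-u)$ and $-\lambda(\varphi,\varphi-u)$ combine into $-\lambda\|\varphi-u\|^2$, while multiplying the fourth equation by $w$ gives $J_3'=\rho_3\|w_t\|^2-\delta\|w_x\|^2+\beta(w_x,\varphi_t)-\kappa(w_x,w_{xt})$, supplying $-\delta\|w_x\|^2$.

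Collecting $\mathcal{F}=J_1+J_2+J_3$, I obtain
\begin{equation*}
\mathcal{F}'(t)\le -c_1\big(\|u_x\|^2+\|\varphi-u\|^2+\|\varphi_x+\psi\|^2+\|\psi_x\|^2+\|w_x\|^2\big)+C_2\big(\|u_t\|^2+\|\varphi_t\|^2+\|w_{xt}\|^2\big),
\end{equation*}
where the remaining cross terms $\mu(u,u_t)$, $\gamma(\varphi,\varphi_t)$, $\beta(\varphi,w_{xt})$, $\beta(w_x,\varphi_t)$, $\kappa(w_x,w_{xt})$ are absorbed by Young's inequality with a small parameter, each time using Poincaré's inequality to bound $\|u\|$, $\|\varphi\|$ by the corresponding gradient norms. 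In particular $w_t\in H^1_0(0,L)$ (since $w(0,t)=w(L,t)=0$ forces $w_t(0,t)=w_t(L,t)=0$) gives $\|w_t\|^2\le C\|w_{xt}\|^2$, which is exactly what converts the undissipated kinetic term $\rho_3\|w_t\|^2$ appearing in $J_3'$ into a dissipation-controlled quantity.

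The proof is then completed by setting $\mathcal{L}(t)=N E(t)+\mathcal{F}(t)$ with $N$ large. Since $|\mathcal{F}(t)|\le C E(t)$ by Cauchy--Schwarz and Poincaré, $\mathcal{L}$ is equivalent to $E$, i.e. $\alpha_1 E(t)\le \mathcal{L}(t)\le\alpha_2 E(t)$. Combining \eqref{dissipation eq} with the bound on $\mathcal{F}'$ and choosing $N$ so large that $N\min\{\mu,\gamma,\kappa\}$ dominates $C_2$, all the positive dissipation-type terms are absorbed and one gets $\mathcal{L}'(t)\le -c\,E(t)\le -\tfrac{c}{\alpha_2}\mathcal{L}(t)$, whence $\mathcal{L}(t)\le\mathcal{L}(0)e^{-(c/\alpha_2)t}$ and, by the equivalence, \eqref{energy:decay} with $\sigma_1=c/\alpha_2$ and $\sigma_0=(\alpha_2/\alpha_1)E(0)$. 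I expect the main obstacle to be the careful bookkeeping of the thermoelastic coupling terms $\beta(\varphi,w_{xt})$ and $\beta(w_x,\varphi_t)$ combined with the correct application of the elliptic relation from the third equation; once that relation is used to split $(\varphi_x,\varphi_x+\psi)$, the estimates close without any additional coupling functional, because the mechanical dampings $\mu u_t$, $\gamma\varphi_t$ and the thermal dissipation $\kappa w_{xt}$ already act on all three velocity-type variables.
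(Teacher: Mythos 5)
Your proposal is correct and follows essentially the same route as the paper: the paper's Lyapunov functional is $L(t)=NE(t)+I_1(t)+I_2(t)$ with $I_1=\int_0^L\bigl(\rho u_tu+\tfrac{\mu}{2}u^2\bigr)dx+\int_0^L\bigl(\rho_1\varphi_t\varphi+\tfrac{\gamma}{2}\varphi^2\bigr)dx$ and $I_2=\rho_3\int_0^L w_tw\,dx+\tfrac{\kappa}{2}\int_0^L w_x^2\,dx+\beta\int_0^L\varphi_xw\,dx$, which are exactly your $J_1+J_2$ and $J_3$ augmented by correction terms that turn the cross terms $\mu(u,u_t)$, $\gamma(\varphi,\varphi_t)$, $\kappa(w_x,w_{xt})$ and the thermoelastic coupling into exact derivatives, whereas you absorb them by Young and Poincar\'e --- a purely bookkeeping difference. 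Your use of the elliptic third equation to write $(\varphi_x,\varphi_x+\psi)=\|\varphi_x+\psi\|^2+\tfrac{b}{K}\|\psi_x\|^2$ is the same device as the paper's multiplication of that equation by $\psi$, and the remaining steps (Poincar\'e for $\|w_t\|$, equivalence $\xi_1E\leqslant L\leqslant\xi_2E$, choice of $N$ large, Gronwall) coincide with the paper's proof.
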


We begin by proving some important lemmas that will be essential 
to prove Theorem~\ref{theorem:2}.

\begin{lemma}
\label{lemma1:stab}
The functional 
\begin{equation*}
I_1(t)=\int_{0}^{L}\left(\rho u_t u+\dfrac{\mu}{2}u^2\right) dx
+\int_{0}^{L}\left(\rho_1\varphi_t \varphi+\dfrac{\gamma}{2}\varphi^2\right) dx
\end{equation*}
satisfies
\begin{equation}
\label{I'_1 ineq}	
\begin{split}
\dfrac{dI_1(t)}{dt}	
\leqslant &
-\alpha \int_{0}^{L} u_x^2 dx-\lambda \int_{0}^{L} (\varphi-u)^2 dx
- \dfrac{K}{2} \int_{0}^{L}(\varphi_x+\psi)^2 dx\\
&-\dfrac{b}{2}\int_{0}^{L} \psi^2_x dx +\rho\int_{0}^{L} u_t^2 dx
+\rho_1\int_{0}^{L} \varphi_t^2 dx +C_1\int_{0}^{L}w^2_{xt} dx.
\end{split}
\end{equation}
\end{lemma}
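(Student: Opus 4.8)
The plan is to differentiate $I_1$ term by term, insert the field equations of (\ref{main:system}) for $u_{tt}$ and $\varphi_{tt}$, integrate by parts using the Dirichlet boundary conditions, and finally absorb the single sign-indefinite cross term by Young's and Poincar\'e's inequalities.

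First I would compute
\[
\frac{d}{dt}\int_0^L\Bigl(\rho u_t u+\tfrac{\mu}{2}u^2\Bigr)dx
=\int_0^L\rho u_{tt}u\,dx+\rho\int_0^L u_t^2\,dx+\mu\int_0^L u u_t\,dx,
\]
and substitute $\rho u_{tt}=\alpha u_{xx}+\lambda(\varphi-u)-\mu u_t$ from the first equation. Integrating $\alpha\int_0^L u_{xx}u\,dx$ by parts (the boundary terms vanish since $u(0,t)=u(L,t)=0$) produces $-\alpha\int_0^L u_x^2\,dx$, while the two damping contributions $-\mu\int_0^L u_t u\,dx$ and $+\mu\int_0^L u u_t\,dx$ cancel exactly. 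This leaves the first block $-\alpha\int_0^L u_x^2\,dx+\lambda\int_0^L(\varphi-u)u\,dx+\rho\int_0^L u_t^2\,dx$.

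Next I would treat the $\varphi$-block the same way, replacing $\rho_1\varphi_{tt}$ from the second equation. Integrating the shear term by parts gives $-K\int_0^L(\varphi_x+\psi)\varphi_x\,dx$; writing $\varphi_x=(\varphi_x+\psi)-\psi$ splits it into $-K\int_0^L(\varphi_x+\psi)^2\,dx+K\int_0^L(\varphi_x+\psi)\psi\,dx$. The key step is to use the static third equation: multiplying $-b\psi_{xx}+K(\varphi_x+\psi)=0$ by $\psi$ and integrating yields $K\int_0^L(\varphi_x+\psi)\psi\,dx=-b\int_0^L\psi_x^2\,dx$, so this block equals $-K\int_0^L(\varphi_x+\psi)^2\,dx-b\int_0^L\psi_x^2\,dx$. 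The $\gamma$-damping cross terms cancel as before. Adding the two blocks and combining $\lambda\int_0^L(\varphi-u)u\,dx-\lambda\int_0^L(\varphi-u)\varphi\,dx=-\lambda\int_0^L(\varphi-u)^2\,dx$, I obtain (suppressing $dx$ for brevity)
\[
\frac{dI_1}{dt}=-\alpha\!\int_0^L\! u_x^2-\lambda\!\int_0^L\!(\varphi-u)^2-K\!\int_0^L\!(\varphi_x+\psi)^2-b\!\int_0^L\!\psi_x^2-\beta\!\int_0^L\! w_{xt}\varphi+\rho\!\int_0^L\! u_t^2+\rho_1\!\int_0^L\!\varphi_t^2.
\]

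The main obstacle, and the only nontrivial estimate, is the sign-indefinite thermal term $-\beta\int_0^L w_{xt}\varphi\,dx$. I would bound it by Young's inequality as $\frac{\beta^2}{4\varepsilon}\int_0^L w_{xt}^2\,dx+\varepsilon\int_0^L\varphi^2\,dx$, and then eliminate the unwanted $\varepsilon\int_0^L\varphi^2\,dx$ by Poincar\'e's inequality (valid because $\varphi(0,t)=\psi(0,t)=0$): $\int_0^L\varphi^2\,dx\le C\int_0^L\varphi_x^2\,dx\le C'\bigl(\int_0^L(\varphi_x+\psi)^2\,dx+\int_0^L\psi_x^2\,dx\bigr)$. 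Taking $\varepsilon$ small enough that the resulting multiples of $\int_0^L(\varphi_x+\psi)^2\,dx$ and $\int_0^L\psi_x^2\,dx$ do not exceed $\tfrac{K}{2}$ and $\tfrac{b}{2}$ respectively, these are absorbed into the spare halves of the coefficients $-K$ and $-b$, which thereby become $-\tfrac{K}{2}$ and $-\tfrac{b}{2}$; the leftover term is renamed $C_1\int_0^L w_{xt}^2\,dx$. This produces exactly (\ref{I'_1 ineq}). I expect the only delicate point to be the quantitative tuning of $\varepsilon$ against the Poincar\'e constants so that the absorption is legitimate.
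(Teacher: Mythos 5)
Your proposal is correct and follows essentially the same route as the paper: differentiate $I_1$, substitute the first two equations, use the third equation tested against $\psi$ to convert $K\int_0^L(\varphi_x+\psi)\psi\,dx$ into $-b\int_0^L\psi_x^2\,dx$, and absorb the thermal coupling term into the spare halves of the $K$ and $b$ coefficients via Young and Poincar\'e. The only (inessential) difference is that the paper first integrates the coupling term by parts to $\beta\int_0^L w_t\varphi_x\,dx$ and applies Poincar\'e to $w_t$ and $\psi$, whereas you keep $-\beta\int_0^L w_{xt}\varphi\,dx$ and apply Poincar\'e to $\varphi$; both choices lead to the same constant-tuning argument and the same conclusion.
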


\begin{proof}
Multiplying  ${\text{(\ref{main:system})}}_\text{1}$  
by $u$, using the fact that $u_{tt}u
=\dfrac{\partial}{\partial t}(u_t u)-u_t^2$ 
and integrating by parts, we get that
\begin{equation}
\label{decay:eq1}	
\dfrac{d}{dt}\int_{0}^{L} \left(\rho u_{t} u+\dfrac{\mu}{2}u^2 \right) dx
-\rho\int_{0}^{L}u_t^2 dx +\alpha\int_{0}^{L}u_x^2 dx
- \lambda\int_{0}^{L} (\varphi-u)udx=0.
\end{equation}
Similarly, multiplying ${\text{(\ref{main:system})}}_\text{2}$ 
by $\varphi$, using the fact that $\varphi_{tt}\varphi
=\dfrac{\partial}{\partial t}(\varphi_t\varphi)-\varphi_t^2$ 
and integrating by parts, one obtains
\begin{equation}
\label{decay:eq2}	
\begin{split}
\dfrac{d}{dt}\int_{0}^{L} \left(\rho_1 \varphi_{t} 
\varphi+\dfrac{\gamma}{2} \varphi^2\right) dx
&-\rho_1\int_{0}^{L}\varphi_t^2 dx + K\int_{0}^{L} 
(\varphi_x+\psi)\varphi_xdx\\
&+ \lambda\int_{0}^{L} \left(\varphi-u\right)\varphi dx
+\beta\int_{0}^{L} w_{xt}\varphi dx=0.
\end{split}
\end{equation}
Now, multiplying ${\text{(\ref{main:system})}}_\text{3}$ by $\psi$, it results that
\begin{equation}
\label{decay:eq3}	
b\int_{0}^{L} \psi_x^2 dx 
+K \int_{0}^{L} (\varphi_x+\psi)\psi dx=0.
\end{equation} 
Adding (\ref{decay:eq1})--(\ref{decay:eq3}), we arrive at 
\begin{equation}
\label{I'_1 eq}	
\begin{split}
\dfrac{dI_1(t)}{dt}=&\rho\int_{0}^{L}u_t^2 dx 
-\alpha \int_{0}^{L}u_x^2 dx-\lambda 
\int_{0}^{L}(\varphi-u)^2 dx+\rho_1 \int_{0}^{L}\varphi_t^2dx\\
&-K  \int_{0}^{L} (\varphi_x+\psi)^2 dx-b\int_{0}^{L}\psi_x^2 dx  
+\beta\int_{0}^{L} w_t\varphi_x dx.
\end{split}
\end{equation} 
The fact that $\varphi_x=(\varphi_x+\psi)-\psi$ leads to
\begin{equation}
\label{phi_x:eq}	
\beta\int_{0}^{L} w_t\varphi_x dx
= \beta\int_{0}^{L} w_t(\varphi_x+\psi)dx
-\beta \int_{0}^{L}w_t \psi dx
\end{equation}
and Young's and Poincar\'{e}'s inequalities yield that
\begin{equation}
\label{I_1:eq1}	
\beta\int_{0}^{L} w_t(\varphi_x+\psi) dx
\leqslant \dfrac{K}{2}\int_{0}^{L}(\varphi_x+\psi)^2 dx 
+\dfrac{\beta^2c_p}{2K}\int_{0}^{L}w_{xt}^2 dx,
\end{equation} 
\begin{equation}
\label{I_1:eq2}	
-\beta\int_{0}^{L} w_t\psi dx
\leqslant \dfrac{b}{2}\int_{0}^{L}\psi_x^2 dx 
+\dfrac{\beta^2c^2_p}{2b}\int_{0}^{L}w_{xt}^2 dx,
\end{equation} 
where $c_p $ is a Poincar\'{e}'s constant. 
We obtain (\ref{I'_1 ineq}) by plugging 
(\ref{I_1:eq1}) and (\ref{I_1:eq2}) into (\ref{I'_1 eq}).
\end{proof}

\begin{lemma}
\label{lemma2:stab}
The functional 
\begin{equation*}
I_2(t)=\rho_3 \int_{0}^{L} w_t w dx
+\dfrac{\kappa}{2}\int_{0}^{L} w_x^2 dx+\beta\int_{0}^{L} \varphi_x w dx
\end{equation*}
satisfies
\begin{equation}
\label{I'_2(t):inequ}	
\dfrac{dI_2(t)}{dt}\leqslant  -\delta\int_{0}^{L}w_x^2dx
+\dfrac{b}{4}\int_{0}^{L} \psi_x^2dx+\dfrac{K}{4} 
\int_{0}^{L}(\varphi_x + \psi)^2dx
+C_2\int_{0}^{L} w_{xt}^2dx. 
\end{equation}
\end{lemma}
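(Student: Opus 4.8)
The plan is to differentiate $I_2$ directly, substitute the fourth equation of \eqref{main:system} to eliminate the second-order time derivative, and then exploit the homogeneous boundary conditions to integrate by parts. Writing $I_2(t)=\rho_3\int_0^L w_t w\,dx+\frac{\kappa}{2}\int_0^L w_x^2\,dx+\beta\int_0^L \varphi_x w\,dx$, differentiation gives
\begin{equation*}
\frac{dI_2}{dt}=\rho_3\int_0^L w_{tt}w\,dx+\rho_3\int_0^L w_t^2\,dx+\kappa\int_0^L w_x w_{xt}\,dx+\beta\int_0^L \varphi_{xt}w\,dx+\beta\int_0^L \varphi_x w_t\,dx.
\end{equation*}
Using $\rho_3 w_{tt}=\delta w_{xx}-\beta\varphi_{xt}+\kappa w_{xxt}$ in the first integral and integrating by parts (recall $w(0,t)=w(L,t)=0$) converts $\delta\int w_{xx}w$ into $-\delta\int w_x^2$ and $\kappa\int w_{xxt}w$ into $-\kappa\int w_x w_{xt}$, while leaving a term $-\beta\int \varphi_{xt}w$.

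The main simplification I expect is a pair of clean cancellations: the two $\kappa$-integrals $-\kappa\int w_x w_{xt}$ and $+\kappa\int w_x w_{xt}$ cancel, and the two $\beta$-integrals $-\beta\int\varphi_{xt}w$ and $+\beta\int\varphi_{xt}w$ cancel as well, with no further integration by parts needed. This reduces the identity to
\begin{equation*}
\frac{dI_2}{dt}=-\delta\int_0^L w_x^2\,dx+\rho_3\int_0^L w_t^2\,dx+\beta\int_0^L \varphi_x w_t\,dx.
\end{equation*}

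The step I anticipate as the real obstacle is that this identity carries the \emph{positive} term $\rho_3\int w_t^2$, which is absent from the target \eqref{I'_2(t):inequ}. The key observation resolving this is that $w_t$ inherits the homogeneous Dirichlet condition ($w_t(0,t)=w_t(L,t)=0$), so Poincar\'e's inequality bounds $\int w_t^2\leqslant c_p\int w_{xt}^2$; thus $\rho_3\int w_t^2$ is absorbed into the dissipative $w_{xt}^2$ term. For the remaining cross term I would mimic the decomposition $\varphi_x=(\varphi_x+\psi)-\psi$ used in \eqref{phi_x:eq}, writing $\beta\int\varphi_x w_t=\beta\int(\varphi_x+\psi)w_t-\beta\int\psi w_t$. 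Applying Young's inequality to the first piece (balancing against $\tfrac{K}{4}\int(\varphi_x+\psi)^2$) and Young's together with Poincar\'e on $\psi$ to the second (balancing against $\tfrac{b}{4}\int\psi_x^2$) produces exactly the constants $K/4$ and $b/4$ demanded by \eqref{I'_2(t):inequ}, at the cost of further multiples of $\int w_t^2$. A final application of Poincar\'e to these leftover $w_t^2$ terms collapses everything into a single $C_2\int w_{xt}^2$ contribution, with $C_2$ depending on $\rho_3$, $\beta$, $K$, $b$, and $c_p$, which completes the proof of \eqref{I'_2(t):inequ}.
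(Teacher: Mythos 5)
Your proposal is correct and follows essentially the same route as the paper: you derive the identity $\frac{dI_2}{dt}=-\delta\int_0^L w_x^2\,dx+\rho_3\int_0^L w_t^2\,dx+\beta\int_0^L \varphi_x w_t\,dx$ (the paper's intermediate equation, with the same $\kappa$- and $\beta$-cancellations built into the design of $I_2$), then use the decomposition $\varphi_x=(\varphi_x+\psi)-\psi$ with Young's and Poincar\'e's inequalities to produce the $K/4$ and $b/4$ terms, and absorb all leftover $w_t^2$ contributions into $C_2\int_0^L w_{xt}^2\,dx$ via Poincar\'e. This is exactly the paper's argument, only written out in more explicit detail.
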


\begin{proof}
Differentiating $I_2$, using ${\text{(\ref{main:system})}}_\text{4}$,  
integration by parts and recalling the boundary conditions, we get that
\begin{equation}
\label{I'_2(t):eq}	
\dfrac{dI_2(t)}{dt} 
=\rho_3 \int_{0}^{L}w_{t}^2 dx -\delta \int_{0}^{L}w_{x}^2 dx
+\beta\int_{0}^{L}  w_t \varphi_{x} dx.
\end{equation}
It follows by (\ref{phi_x:eq}) and Young's and Poincar\'{e}'s inequalities that 
\begin{equation}
\label{I_2:eq1}	
\beta\int_{0}^{L} w_t(\varphi_x+\psi) dx
\leqslant \dfrac{K}{4}\int_{0}^{L}(\varphi_x+\psi)^2 dx 
+\dfrac{\beta^2c_p}{K}\int_{0}^{L}w_{xt}^2 dx
\end{equation}
and
\begin{equation}
\label{I_2:eq2}	
-\beta\int_{0}^{L} w_t\psi dx\leqslant \dfrac{b}{4}\int_{0}^{L}\psi_x^2 dx 
+\dfrac{\beta^2c^2_p}{b}\int_{0}^{L}w_{xt}^2 dx. 
\end{equation}
Applying Poincar\'{e}'s inequality leads to
\begin{equation}
\label{I_2:eq3}	
\rho_3\int_{0}^{L} w_t^2 dx
\leqslant \rho_3 c_p\int_{0}^{L}w_{xt}^2 dx. 
\end{equation}
By substituting (\ref{I_2:eq1}), (\ref{I_2:eq2}) 
and (\ref{I_2:eq3}) into (\ref{I'_2(t):eq}), 
we obtain that (\ref{I'_2(t):inequ}) holds.
\end{proof}	

Let us introduce now the Lyapunov functional
\begin{equation}
\label{Lyapunov:functional}	
L(t)=NE(t) + I_1(t)+ I_2(t),
\end{equation}
where $N$ is a positive constant to be fixed later.

\begin{lemma}
Let $(u,\varphi,\psi,w)$ be a solution of (\ref{main:system}). 
Then there exist two positive constants, $\xi_1$ and $\xi_2$, 
such that the functional energy $E$ is equivalent 
to functional $L$:
\begin{equation}
\label{Lyapunov:functional:ineq}	
\xi_1E(t)\leqslant L(t)\leqslant \xi_2E(t).
\end{equation}		
\end{lemma}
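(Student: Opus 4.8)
The plan is to establish the two-sided bound \eqref{Lyapunov:functional:ineq} by showing that the difference $L(t) - NE(t) = I_1(t) + I_2(t)$ can be controlled by $E(t)$ in absolute value, so that choosing $N$ large enough yields equivalence. Concretely, I would first prove that there exists a constant $C>0$, depending only on the physical parameters and the Poincar\'{e} constant $c_p$, such that $|I_1(t) + I_2(t)| \leqslant C\, E(t)$ for all $t \geqslant 0$. Once this is in hand, the chain $(N-C)E(t) \leqslant L(t) \leqslant (N+C)E(t)$ follows immediately, and it suffices to fix $N > C$ so that $\xi_1 = N - C > 0$ and $\xi_2 = N + C > 0$ serve as the desired constants.

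To bound $|I_1(t)+I_2(t)|$, the main tool is a term-by-term estimate of each summand in $I_1$ and $I_2$ against the quadratic quantities appearing in the energy \eqref{Energy funct}. For the cross terms such as $\rho \int_0^L u_t u\,dx$, $\rho_1 \int_0^L \varphi_t \varphi\,dx$, and $\rho_3 \int_0^L w_t w\,dx$, I would apply Young's inequality to split each product, for instance
\begin{equation*}
\left| \rho \int_{0}^{L} u_t u\, dx \right| \leqslant \frac{\rho}{2}\int_{0}^{L} u_t^2\, dx + \frac{\rho}{2}\int_{0}^{L} u^2\, dx,
\end{equation*}
and then invoke Poincar\'{e}'s inequality $\int_0^L u^2\,dx \leqslant c_p \int_0^L u_x^2\,dx$ to dominate the displacement term by $\int_0^L u_x^2\,dx$, which is one of the summands of $2E(t)$. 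The pure quadratic terms $\frac{\mu}{2}u^2$, $\frac{\gamma}{2}\varphi^2$, and $\frac{\kappa}{2} w_x^2$ are handled directly: the first two again by Poincar\'{e} against $u_x^2$ and $\varphi_x^2$, and the last one is already of the same type as the $w_x^2$ term in $E(t)$. The remaining cross term $\beta \int_0^L \varphi_x w\, dx$ in $I_2$ is estimated by Young's inequality followed by Poincar\'{e} on $w$, bounding it by a multiple of $\int_0^L \varphi_x^2\,dx + \int_0^L w_x^2\,dx$.

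A minor technical point deserving care is that the energy \eqref{Energy funct} controls $\int_0^L (\varphi_x+\psi)^2\,dx$ and $\int_0^L \psi_x^2\,dx$ rather than $\int_0^L \varphi_x^2\,dx$ directly. To dominate the terms involving $\varphi_x$, I would use the elementary splitting $\varphi_x = (\varphi_x + \psi) - \psi$ together with $(a-b)^2 \leqslant 2a^2 + 2b^2$ and Poincar\'{e}'s inequality on $\psi$, giving
\begin{equation*}
\int_{0}^{L} \varphi_x^2\, dx \leqslant 2\int_{0}^{L} (\varphi_x+\psi)^2\, dx + 2 c_p \int_{0}^{L} \psi_x^2\, dx,
\end{equation*}
so that $\int_0^L \varphi_x^2\,dx$ is itself bounded by a constant times $E(t)$. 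Collecting all these estimates produces the uniform bound $|I_1(t)+I_2(t)| \leqslant C E(t)$ with an explicit $C$. The step I expect to require the most bookkeeping rather than genuine difficulty is tracking all the parameter-dependent constants through Young's and Poincar\'{e}'s inequalities; there is no real obstacle here, since every term is quadratic and the energy already contains a coercive control of all the relevant norms.
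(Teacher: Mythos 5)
Your proposal is correct and follows essentially the same route as the paper: both bound $|L(t)-NE(t)|=|I_1(t)+I_2(t)|$ termwise via Young's and Poincar\'{e}'s inequalities together with the splitting $\varphi_x=(\varphi_x+\psi)-\psi$, and then choose $N$ large enough to get $(N-C)E\leqslant L\leqslant(N+C)E$. The only cosmetic difference is that the paper handles $u$ through the decomposition $u=-(\varphi-u)+\varphi$ whereas you apply Poincar\'{e} to $u$ directly against the $\alpha u_x^2$ term of the energy; both are equally valid.
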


\begin{proof}
We know that	
\begin{equation*}
\begin{split}
\left| L(t)-NE(t)\right| 
\leqslant & \rho\int_{0}^{L} \left| u_t u\right|  dx
+\dfrac{\mu}{2} \int_{0}^{L} u^2 dx+\rho_1
\int_{0}^{L} \left| \varphi_t \varphi\right|  dx
+\dfrac{\gamma}{2}\int_{0}^{L} \varphi^2  dx\\ 
&+ \rho_3 \int_{0}^{L}\left|  w_t w\right| dx
+\dfrac{\kappa}{2}\int_{0}^{L}  w_x^2 dx
+\left| \beta\right| \int_{0}^{L}\left|  \varphi_x w\right| dx.
\end{split}
\end{equation*}
By using $\varphi_x=(\varphi_x+\psi)-\psi$,  
$u=-(\varphi-u)+\varphi$, and Young's and Poincar\'{e}s inequalities, 
we obtain, for some $\xi>0$, that
\begin{equation*}
\left|L(t)-NE(t)\right| \leqslant \xi E(t),  
\end{equation*}
which yields 
\begin{equation*}
(N-\xi)E(t)\leqslant L(t)\leqslant (N+\xi)E(t).
\end{equation*}
The estimate (\ref{Lyapunov:functional:ineq})
follows by choosing $N$ accordingly.
\end{proof}

We are now in conditions to prove Theorem~\ref{theorem:2}.

\begin{proof}(of Theorem~\ref{theorem:2})
Taking the time derivative of $L(t)$, using Lemmas~\ref{lemma1:stab} 
and \ref{lemma2:stab} and the energy dissipation law (\ref{dissipation eq}), 
it follows from (\ref{Lyapunov:functional}) that
\begin{equation*}
\begin{split}
\dfrac{dL(t)}{dt}
\leqslant& -\alpha\int_{0}^{L}u_x^2 dx -\lambda\int_{0}^{L}(\varphi-u)^2 dx\\
& -\dfrac{K}{4}\int_{0}^{L}(\varphi_x+\psi)^2 dx
-\dfrac{b}{4}\int_{0}^{L}\psi_x^2 dx-\delta \int_{0}^{1}w_x^2 dx\\
& -\left[ N\mu-\rho \right] \int_{0}^{L}u_t^2 dx
-\left[ N\gamma-\rho_1 \right] \int_{0}^{L}\varphi_t^2 dx\\
&-\left[N\kappa-C_1-C_2 \right] \int_{0}^{1}w_{xt}^2 dx.
\end{split}
\end{equation*}
Next, we choose $N$ large enough so that (\ref{Lyapunov:functional:ineq}) 
remains valid and 
\begin{equation*}
\begin{cases}
N\mu-\rho>0,\\
N\gamma-\rho_1>0,\\
N\kappa-C_1-C_2>0.
\end{cases}
\end{equation*}
Thus, for some  $\zeta_1>0$, we have 
\begin{equation*}
\dfrac{dL(t)}{dt}\leqslant -\zeta_1 
\int_{0}^{L}[u_t^2+u_x^2+(\varphi-u)^2
+\varphi_t^2+(\varphi_x+\psi)^2+\psi_x^2+w_{xt}^2+w_x^2]\; dx.
\end{equation*}
On account of (\ref{Energy funct}), we can write that
\begin{equation}
\label{L':ineq}	
\dfrac{dL(t)}{dt}\leqslant -\zeta_2 E(t),
\quad \forall\; t\geqslant0,
\end{equation}
for some  $\zeta_2>0$. Combining \eqref{L':ineq} 
with (\ref{Lyapunov:functional:ineq}), we get
\begin{equation}
\label{L':ineq2}	
\dfrac{dL(t)}{dt} \leqslant -\sigma_1 L(t), 
\quad \forall\; t\geqslant 0.
\end{equation}
A simple integration of (\ref{L':ineq2}) over $(0,t)$ leads to 
\begin{equation*}
L(t) \leqslant  L(0)e^{-\sigma_1 t}, 
\quad \forall\; t\geqslant 0. 
\end{equation*}
Again, if we recall (\ref{Lyapunov:functional:ineq}), 
the theorem is proved with $\sigma_0=\dfrac{\xi_2}{\xi_1}E(0)$.  	
\end{proof}


\section{Numerical approximation}
\label{sec:05}

Now, we present a numerical analysis of the problem 
studied theoretically in Sections~\ref{sec:03} and \ref{sec:04}.


\subsection{Description of the discrete problem}

We acquire a weak form associated to the continuous problem
by multiplying equations (\ref{main:system}) with the test 
functions $\bar{u}$, $\bar{\varphi}$, $\bar{\psi}$, $\bar{w} \in H^1_0(0,L)$, 
respectively. Let $\xi=u_t$, $\Phi=\varphi_t$, $\Psi=\psi_t$, 
and $\vartheta=w_t$. Applying integration by parts 
and using the boundary conditions, we find that
\begin{equation}
\label{variational:problem}	
\begin{cases}
\rho( \xi_t,\bar{u})+\alpha(u_x,\bar{u}_x)
-\lambda(\varphi-u,\bar{u})+\mu(\xi,\bar{u})=0,\\
\rho_1(\Phi_t,\bar{\varphi})+K(\varphi_x+\psi,\bar{\varphi}_x)
+\lambda(\varphi-u,\bar{\varphi})+\gamma(\Phi,\bar{\varphi})
+\beta(\vartheta_x,\bar{\varphi})=0,\\
b(\psi_x,\bar{\psi}_x)+K(\varphi_x+\psi,\bar{\psi})=0,\\
\rho_3(\vartheta_t,\bar{w})+\delta(w_x,\bar{w}_x)
+\beta(\Phi_x,\bar{w})+\kappa(\vartheta_x,\bar{w}_x)=0.
\end{cases}
\end{equation}
To obtain the spatial approximation, we introduce a uniform 
partition $(\varGamma_h)_h$ of the interval $[0, L]$
into $M$ subintervals, such that $0 = x_0 < x_1 < \cdots < x_M = L$, 
with a uniform length $h = x_{i+1} - x_i = 1/M$. Therefore,
the variational space $H^1_0$ is approximated by the 
finite-dimensional space $S^h$, defined as
\begin{equation*}
S_h=\left\lbrace v_h \in H^1_0(0,L): \forall \; 
[x_i,x_{i+1}] \in \varGamma_h,\; 
v_h|_{[x_i,x_{i+1}]} \in P_1([x_i,x_{i+1}]) \right\rbrace.
\end{equation*}
Here, the space $P_1([x_i,x_{i+1}])$ denotes the space of polynomials 
of a degree $\leqslant 1$ in the subinterval $[x_i,x_{i+1}]$, i.e., 
the finite element space is made of continuous and piecewise affine functions,  
and $h$ is the spatial discretization parameter.

In order to define the discrete initial conditions, 
assuming that they are smooth enough, we set
\begin{equation*}
u^0_h=P_h u_0,\;\xi^0_h=P_h u_1,\;\varphi^0_h=P_h \varphi_0,\; 
\Phi^0_h=P_h \varphi_1,\;\psi^0_h=P_h \psi_0,\; 
w^0_h=P_h w_0,\; \vartheta^0_h=P_h w_1,
\end{equation*}
where $P_h$ is the projection operator,  
$P_h: H^1_0(0,L)\longrightarrow S_h$ (see \cite{Bernardi}), 
defined by
\begin{equation*}
\forall \; \chi_h \in S_h \;\; ((P_h\eta-\eta)_x,\chi_{hx})=0.
\end{equation*}
The operator $P_h$  preserves the values at all end points of 
the elements in $\Gamma_h$ and fulfills the following 
estimate for all $\eta \in H^1_0(0,L)$:
\begin{equation*}
\rVert P_h \eta-\eta\lVert\leqslant Ch\rVert\eta_x\lVert,
\end{equation*}
as well as for more regular functions $\eta$,
\begin{equation}
\label{space:error}	
\rVert P_h \eta-\eta\lVert +h\rVert (P_h \eta-\eta)_x\lVert 
\leqslant Ch^2\rVert\eta_{xx}\lVert.
\end{equation} 

As a second step, to discretize the time derivatives for a given final time $T > 0$ 
and a given positive integer $N$, we define the time step $\Delta t = T/N$ 
and the nodes $t_n = n\Delta t$, $n = 0, \ldots , N$.

When using the backward Euler scheme in time, the fully finite element 
approximation of the variational problem (\ref{variational:problem}) 
consists to find $\xi^n_h,\Phi^n_h,\psi^n_h,\vartheta^n_h \in S_h$ 
such that, for $n=1,\ldots, N$ and for all $\bar{u}_h$, $\bar{\varphi}_h$, 
$\bar{\psi}_h$, $\bar{w}_h \in S_h$,
\begin{equation}
\label{Stability:eq1}	
\begin{cases}
\dfrac{\rho}{\Delta t}( \xi_h^n -\xi_h^{n-1},\bar{u}_h)+\alpha(u^n_{hx},\bar{u}_{hx})
-\lambda(\varphi^n_h-u^n_h,\bar{u}_h)+\mu(\xi^n_h,\bar{u}_h)=0,\\[0.3cm]
\begin{split}
\dfrac{\rho_1}{\Delta t}
&(\Phi_h^n-\Phi_h^{n-1},\bar{\varphi}_h)+K(\varphi_{hx}^n+\psi^n_h,\bar{\varphi}_{hx})\\
&+\lambda(\varphi_h^n-u_h^n,\bar{\varphi_h})
+\gamma(\Phi^n_h,\bar{\varphi}_h)+\beta(\vartheta^n_{hx},\bar{\varphi}_h)=0,
\end{split}\\[0.3cm]
b(\psi_{hx}^n,\bar{\psi}_{hx})+K(\varphi_{hx}^n+\psi^n_h,\bar{\psi}_h)=0,\\[0.3cm]
\dfrac{\rho_3}{\Delta t}(\vartheta_h^n-\vartheta_h^{n-1},\bar{w}_h)
+\delta(w^n_{hx},\bar{w}_{hx})+\beta(\Phi^n_{hx},\bar{w}_h)
+\kappa(\vartheta^n_{hx},\bar{w}_{hx})=0,
\end{cases}
\end{equation}
where
\begin{equation*}
u^n_h= u^{n-1}_h+\Delta t\; \xi^n_h,\;\varphi^n_h
= \varphi^{n-1}_h+\Delta t\; \Phi^n_h,\; 
w^n_h= w^{n-1}_h+\Delta t\; \vartheta^n_h.
\end{equation*}
By using the well-known Lax--Milgram lemma and the assumptions imposed on the
constitutive parameters, it is easy to obtain that the fully discrete 
problem (\ref{Stability:eq1}) has a unique solution.


\subsection{Study of the discrete energy} 

The next result is a discrete version of the energy decay property 
(\ref{dissipation eq}) satisfied by the continuous solution.

\begin{theorem}
Let the discrete energy be given by
\begin{equation}
\label{discrete:energy}	
\begin{split}
E^n=&\dfrac{1}{2}\left(\rho\rVert \xi^n_h\lVert^2
+\alpha \rVert u^n_{hx}\lVert^2+\lambda\rVert \varphi^n_h
- u^n_{h}\lVert^2+\rho_1\rVert \Phi^n_h\lVert^2\right)\\
&+\dfrac{1}{2}\left( K \rVert \varphi^n_{hx}+\psi^n_h\lVert^2
+b \rVert \psi^n_{hx}\lVert^2+\rho_3\rVert \vartheta^n_h\lVert^2
+\delta\rVert w^n_{hx}\lVert^2 \right). 
\end{split}
\end{equation}
Then, the decay property
\begin{equation*}
\dfrac{E^n-E^{n-1}}{\Delta t}\leqslant 0,
\end{equation*}
holds for $n=1,2,\ldots,N$.
\end{theorem}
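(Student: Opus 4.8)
The plan is to reproduce, at the fully discrete level, the energy identity \eqref{dissipation eq} established for the continuous problem. The idea is to test the four equations of \eqref{Stability:eq1} with the discrete velocities that played the role of $u_t,\varphi_t,\psi_t,w_t$ in the continuous computation: I take $\bar u_h=\xi_h^n$, $\bar\varphi_h=\Phi_h^n$, $\bar w_h=\vartheta_h^n$, and, for the third (stationary) equation, the auxiliary discrete angular velocity $\Psi_h^n:=(\psi_h^n-\psi_h^{n-1})/\Delta t\in S_h$. Summing the four resulting scalar identities will produce $(E^n-E^{n-1})/\Delta t$ together with the dissipative terms and some extra nonnegative remainders coming from the implicit scheme.

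The backward differences are handled by the elementary identity $(a-b,a)=\tfrac12(\lVert a\rVert^2-\lVert b\rVert^2)+\tfrac12\lVert a-b\rVert^2$. Applied to the inertial terms $\tfrac{\rho}{\Delta t}(\xi_h^n-\xi_h^{n-1},\xi_h^n)$, $\tfrac{\rho_1}{\Delta t}(\Phi_h^n-\Phi_h^{n-1},\Phi_h^n)$, $\tfrac{\rho_3}{\Delta t}(\vartheta_h^n-\vartheta_h^{n-1},\vartheta_h^n)$, and --- after using $\xi_{hx}^n=(u_{hx}^n-u_{hx}^{n-1})/\Delta t$, $\Psi_h^n=(\psi_h^n-\psi_h^{n-1})/\Delta t$, etc. --- to the elastic terms $\alpha(u_{hx}^n,\xi_{hx}^n)$, $b(\psi_{hx}^n,\Psi_{hx}^n)$, $\delta(w_{hx}^n,\vartheta_{hx}^n)$, every such pairing splits as $\tfrac{1}{\Delta t}$ times a telescoping difference of the corresponding energy term plus a nonnegative multiple of a squared increment. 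These increments constitute the numerical viscosity of the implicit Euler scheme and only reinforce the inequality.

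The coupling terms require combining equations pairwise. Writing $z_h^n:=\varphi_h^n-u_h^n$, the two $\lambda$-contributions merge into $-\lambda(z_h^n,\xi_h^n)+\lambda(z_h^n,\Phi_h^n)=\tfrac{\lambda}{\Delta t}(z_h^n,z_h^n-z_h^{n-1})$, which again yields the energy term $\tfrac{\lambda}{2}\lVert z_h^n\rVert^2$ plus a nonnegative remainder. Writing $S_h^n:=\varphi_{hx}^n+\psi_h^n$ and noting that $\Phi_{hx}^n+\Psi_h^n=(S_h^n-S_h^{n-1})/\Delta t$, the two $K$-contributions $K(S_h^n,\Phi_{hx}^n)+K(S_h^n,\Psi_h^n)$ collapse to $\tfrac{K}{\Delta t}(S_h^n,S_h^n-S_h^{n-1})$, producing $\tfrac{K}{2}\lVert S_h^n\rVert^2$ and a further nonnegative term. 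The two thermoelastic couplings $\beta(\vartheta_{hx}^n,\Phi_h^n)$ and $\beta(\Phi_{hx}^n,\vartheta_h^n)$ cancel exactly: since $\vartheta_h^n,\Phi_h^n\in S_h\subset H^1_0(0,L)$, an integration by parts gives $(\vartheta_{hx}^n,\Phi_h^n)=-(\vartheta_h^n,\Phi_{hx}^n)$, so their sum vanishes.

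Collecting everything, the sum telescopes to
\begin{equation*}
\frac{E^n-E^{n-1}}{\Delta t}
=-\mu\lVert\xi_h^n\rVert^2-\gamma\lVert\Phi_h^n\rVert^2
-\kappa\lVert\vartheta_{hx}^n\rVert^2-\mathcal{R}^n,
\end{equation*}
where $\mathcal{R}^n\geq 0$ collects all the squared-increment remainders, whence $(E^n-E^{n-1})/\Delta t\leq 0$. I expect the only delicate point to be the bookkeeping of the coupling terms --- in particular recognising the telescoping structure in the $\lambda$- and $K$-pairs and the skew-symmetry that kills the $\beta$-pair; once the correct test functions are fixed, the remaining steps amount to the same discrete product identity applied repeatedly.
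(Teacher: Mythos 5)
Your proposal is correct and follows essentially the same route as the paper: the same choice of discrete test functions $\xi_h^n,\Phi_h^n,\Psi_h^n,\vartheta_h^n$, the same discrete product identity producing telescoping energy differences plus nonnegative squared increments, the same pairwise merging of the $\lambda$- and $K$-couplings, and the same skew-symmetric cancellation of the $\beta$-terms. The only cosmetic difference is that you record the result as an identity with an explicit nonnegative remainder $\mathcal{R}^n$, whereas the paper discards those terms earlier via inequalities; the content is identical.
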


\begin{proof}
Taking $\bar{u}_h=\xi^n_h$, 
$\bar{\varphi}_h=\Phi^n_h$, $\bar{\psi}_h=\Psi^n_h$, 
and $\bar{w}_h=\vartheta^n_h$ in (\ref{Stability:eq1}), 
it results that
\begin{equation}
\label{Stability:eq3}	
\dfrac{\rho}{2\Delta t}\left(\rVert \xi^n_h-\xi^{n-1}_h \lVert^2
+\rVert \xi^n_h \lVert^2-\rVert \xi^{n-1}_h\lVert^2\right)
+\alpha(u^n_{hx},\xi^n_{hx})-\lambda(\varphi^n_h-u^n_h,\xi^n_h)
+\mu{\lVert\xi^n_h\rVert}^2=0,
\end{equation}
\begin{equation}
\label{Stability:eq4}	
\begin{split}
\dfrac{\rho_1}{2\Delta t}
&\left(\rVert \Phi^n_h-\Phi^{n-1}_h \lVert^2
+\rVert \Phi^n_h \lVert^2-\rVert \Phi^{n-1}_h
\lVert^2 \right)+K(\varphi_{hx}^n+\psi^n_h,\Phi^n_{hx})\\
&+\lambda(\varphi_{h}^n-u^n_h,\Phi^n_{h})
+\gamma{\lVert\Phi^n_h\rVert}^2+\beta(\vartheta^n_{hx},\Phi^n_h)=0,
\end{split}
\end{equation}
\begin{equation}
\label{Stability:eq5}	
b(\psi_{hx}^n,\Psi^n_{hx})+K(\varphi_{hx}^n+\psi^n_h,\Psi^n_h)=0,
\end{equation}
and
\begin{equation}
\label{Stability:eq6}	
\dfrac{\rho_3}{2\Delta t}\left(\rVert \vartheta^n_h
-\vartheta^{n-1}_h \lVert^2+\rVert \vartheta^n_h \lVert^2
-\rVert \vartheta^{n-1}_h\lVert^2 \right)+\delta(w^n_{hx},
\vartheta^n_{hx})+\beta(\Phi^n_{hx},\vartheta^n_h)
+\kappa{\lVert\vartheta^n_{hx}\rVert}^2=0.
\end{equation}
Summing equations (\ref{Stability:eq3})--(\ref{Stability:eq6}) 
and keeping in mind that 
\begin{equation*}
K(\varphi_{hx}^n+\psi^n_h,\Phi^n_{hx}+\Psi^n_h)
\geqslant\dfrac{K}{2\Delta t} \left(\rVert \varphi_{hx}^n
+\psi^n_h \lVert^2-\rVert \varphi_{hx}^{n-1}
+\psi^{n-1}_h \lVert^2 \right),
\end{equation*}
\begin{equation*}
\lambda(\varphi_{h}^n-u^n_h,\Phi^n_{hx}-\xi^n_h)
\geqslant\dfrac{\lambda}{2\Delta t} \left(\rVert \varphi_{h}^n
-u^n_h \lVert^2-\rVert \varphi_{h}^{n-1}-u^{n-1}_h \lVert^2 \right),
\end{equation*}
\begin{equation*}
\alpha(u_{hx}^n,\xi^n_{hx})\geqslant\dfrac{\alpha}{2\Delta t} 
\left(\rVert u_{hx}^n \lVert^2-\rVert u_{hx}^{n-1} \lVert^2 \right), 
\end{equation*}
\begin{equation*}
b(\psi_{hx}^n,\Psi^n_{hx})\geqslant\dfrac{b}{2\Delta t} 
\left(\rVert \psi_{hx}^n \lVert^2-\rVert \psi_{hx}^{n-1} \lVert^2 \right),
\end{equation*}
and
\begin{equation*}
\delta(w_{hx}^n,\vartheta^n_{hx})\geqslant\dfrac{\delta}{2\Delta t} 
\left(\rVert w_{hx}^n \lVert^2-\rVert w_{hx}^{n-1} \lVert^2 \right),
\end{equation*}
we find
\begin{equation*}
\begin{split}
\dfrac{\rho}{2\Delta t}
&\left(\rVert \xi^n_h-\xi^{n-1}_h \lVert^2+\rVert \xi^n_h \lVert^2
-\rVert \xi^{n-1}_h\lVert^2 \right)+\dfrac{\alpha}{2\Delta t} 
\left(\rVert u_{hx}^n \lVert^2-\rVert u_{hx}^{n-1} \lVert^2 \right)\\
&+ \dfrac{\lambda}{2\Delta t} \left(\rVert \varphi_{h}^n-u^n_h \lVert^2
-\rVert \varphi_{h}^{n-1}-u^{n-1}_h \lVert^2 \right)+\mu \rVert \xi^n_h\lVert^2\\
&+\dfrac{\rho_1}{2\Delta t}\left(\rVert \Phi^n_h-\Phi^{n-1}_h \lVert^2
+\rVert \Phi^n_h \lVert^2-\rVert \Phi^{n-1}_h\lVert^2 \right)\\
&+\dfrac{K}{2\Delta t} \left(\rVert \varphi_{hx}^n+\psi^n_h \lVert^2
-\rVert \varphi_{hx}^{n-1}+\psi^{n-1}_h \lVert^2 \right)
+\gamma \rVert \Phi^n_h\lVert^2\\
&+\dfrac{\rho_3}{2\Delta t}\left(\rVert \vartheta^n_h
-\vartheta^{n-1}_h \lVert^2+\rVert \vartheta^n_h \lVert^2
-\rVert \vartheta^{n-1}_h\lVert^2 \right)
+\kappa \rVert \vartheta^n_{hx}\lVert^2 \\
&+\dfrac{b}{2\Delta t} \left(\rVert \psi_{hx}^n 
\lVert^2-\rVert \psi_{hx}^{n-1} \lVert^2 \right)
+\dfrac{\delta}{2\Delta t} \left(\rVert w_{hx}^n 
\lVert^2-\rVert w_{hx}^{n-1} \lVert^2 \right) \leqslant 0.
\end{split}
\end{equation*}	
Note that $\rVert \xi^n_h-\xi^{n-1}_h \lVert^2$, 
$\rVert \Phi^n_h-\Phi^{n-1}_h \lVert^2$,  
$\rVert \vartheta^n_h-\vartheta^{n-1}_h \lVert^2$,  
$\rVert \xi^n_h \lVert^2$, $\rVert \Phi^n_h\lVert^2$, 
and $\rVert \vartheta^n_{hx}\lVert^2$  
are positive terms. We deduce that
\begin{equation*}
\begin{split}
&\dfrac{\rho}{2\Delta t}\left(\rVert \xi^n_h \lVert^2
-\rVert\xi^{n-1}_h\lVert^2 \right)+\dfrac{\alpha}{2\Delta t} 
\left(\rVert u_{hx}^n \lVert^2-\rVert u_{hx}^{n-1} \lVert^2 \right)\\
&+\dfrac{\lambda}{2\Delta t} \left(\rVert \varphi_{h}^n
-u^n_h \lVert^2-\rVert \varphi_{h}^{n-1}-u^{n-1}_h \lVert^2 \right)
+\dfrac{\rho_1}{2\Delta t}\left(\rVert \Phi^n_h \lVert^2
-\rVert\Phi^{n-1}_h\lVert^2 \right)\\
&+\dfrac{K}{2\Delta t} \left(\rVert \varphi_{hx}^n+\psi^n_h \lVert^2
-\rVert \varphi_{hx}^{n-1}+\psi^{n-1}_h \lVert^2 \right)
+\dfrac{b}{2\Delta t} \left(\rVert \psi_{hx}^n \lVert^2
-\rVert \psi_{hx}^{n-1} \lVert^2 \right)\\
& + \dfrac{\rho_3}{2\Delta t}\left(\rVert \vartheta^n_h \lVert^2
-\rVert \vartheta^{n-1}_h\lVert^2 \right)
+\dfrac{\delta}{2\Delta t}\left(\rVert w^n_{hx} \lVert^2
-\rVert w^{n-1}_{hx}\lVert^2 \right) \leqslant 0,
\end{split}
\end{equation*}	
which proves the intended result.
\end{proof}


\subsection{Error estimate}

We now state and prove some a priori error estimates for 
the difference between the exact solution and the numerical solution.

The linear convergence of the numerical method 
is outlined in the following theorem.

\begin{theorem}
Suppose that the solution to the continuous problem 
(\ref{main:system}) is regular enough, that is,
\begin{equation*}
u,\varphi,w \in H^3(0,T;L^2(0,L))
\cap H^2(0,T;H^1(0,L))\cap W^{1,\infty}(0,T;H^2(0,L)),
\end{equation*}
\begin{equation*}
\psi \in H^1(0,T;H^2(0,L)).
\end{equation*}
Then, the following error estimates hold:
\begin{equation*}
\begin{split}
&\rVert \xi^n_h-\xi(t_n)\lVert^2+\rVert u^n_{hx}-(u(t_n))_x\lVert^2
+ \rVert \varphi^n_{h}-u^n_h-(\varphi(t_n)-u(t_n))\lVert^2\\
&+\rVert \Phi^n_h-\Phi(t_n)\lVert^2+\rVert \varphi^n_{hx}
+\psi^n_h-((\varphi(t_n))_x+\psi(t_n))\lVert^2 
+\rVert \psi^n_{hx}-(\psi(t_n))_x\lVert^2\\
&+\rVert \vartheta^n_{h}-\vartheta(t_n)\lVert^2 
+\rVert w^n_{hx}-(w(t_n))_x\lVert^2\leqslant C(h^2+(\Delta t)^2),
\end{split}
\end{equation*}
where $C$ is independent of $\Delta t$ and $h$.
\end{theorem}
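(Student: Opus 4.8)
The plan is to adapt the energy argument behind the discrete energy theorem to the error, combined with the approximation property \eqref{space:error} of the projection operator $P_h$. Throughout, I would split each error into a projection part and a purely discrete part. For the velocity variables, write $\xi_h^n-\xi(t_n)=\theta_\xi^n+\rho_\xi^n$ with $\theta_\xi^n=\xi_h^n-P_h\xi(t_n)\in S_h$ and $\rho_\xi^n=P_h\xi(t_n)-\xi(t_n)$, and likewise for $u,\varphi,\Phi,\psi,w,\vartheta$. The projection parts $\rho_\bullet^n$ are controlled directly by \eqref{space:error}, which under the stated $W^{1,\infty}(0,T;H^2)$ and $H^1(0,T;H^2)$ regularity produces contributions of order $h^2$ to the squared error functional; hence the whole task reduces to estimating the discrete parts $\theta_\bullet^n$.

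Next I would derive the error equations. Evaluating the weak formulation \eqref{variational:problem} at $t=t_n$ and subtracting the fully discrete scheme \eqref{Stability:eq1}, I obtain, for each equation, an identity for the discrete errors $\theta_\bullet^n$ whose right-hand side collects two families of consistency terms: the backward-difference truncation errors $\xi_t(t_n)-\tfrac{1}{\Delta t}\bigl(\xi(t_n)-\xi(t_{n-1})\bigr)$ together with their analogues for $\Phi$ and $\vartheta$, and the projection residuals $\rho_\bullet^n$ (and their difference quotients) coming from $P_h$. By Taylor expansion with integral remainder, each truncation term is $O(\Delta t)$ in $L^2(0,L)$ provided the second time derivatives $u_{ttt},\varphi_{ttt},w_{ttt}$ lie in $L^2(0,T;L^2(0,L))$, which is exactly the hypothesis $u,\varphi,w\in H^3(0,T;L^2(0,L))$; after the later summation these produce $(\Delta t)^2$. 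I would also use the reconstruction identities $u_h^n=u_h^{n-1}+\Delta t\,\xi_h^n$ against $u(t_n)=u(t_{n-1})+\int_{t_{n-1}}^{t_n}\xi\,ds$ to relate the displacement errors to the velocity errors up to an $O(\Delta t)$ quadrature consistency term.

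Then I would test the error equations with the discrete errors themselves, taking $\bar u_h=\theta_\xi^n$, $\bar\varphi_h=\theta_\Phi^n$, $\bar w_h=\theta_\vartheta^n$ in the three evolution equations and $\bar\psi_h=(\theta_\psi^n-\theta_\psi^{n-1})/\Delta t$ in the stationary one, and add the results. Exactly as in the discrete energy theorem, the difference-quotient terms telescope, e.g.\ $\tfrac{\rho}{\Delta t}(\theta_\xi^n-\theta_\xi^{n-1},\theta_\xi^n)\ge\tfrac{\rho}{2\Delta t}\bigl(\|\theta_\xi^n\|^2-\|\theta_\xi^{n-1}\|^2\bigr)$, while the coupling contributions $\lambda(\varphi-u)$, $K(\varphi_x+\psi)$ and the thermoelastic terms $\beta(\vartheta_x,\cdot)$, $\beta(\Phi_x,\cdot)$ combine after integration by parts precisely as in the dissipation identity \eqref{dissipation eq}, leaving on the left a discrete energy $\mathcal F^n$ of the errors together with the nonnegative dissipation $\mu\|\theta_\xi^n\|^2+\gamma\|\theta_\Phi^n\|^2+\kappa\|\theta_{\vartheta x}^n\|^2$. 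All consistency and projection terms are moved to the right and absorbed by Young's and Poincar\'e's inequalities, a small fraction of the coercive terms being kept on the left when needed.

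Finally, multiplying by $\Delta t$ and summing over $n$ yields an inequality of the form $\mathcal F^n\le C\bigl(h^2+(\Delta t)^2\bigr)+C\,\Delta t\sum_{j=1}^{n}\mathcal F^j$; a discrete Gronwall lemma then gives $\mathcal F^n\le C(h^2+(\Delta t)^2)$ uniformly in $n$, and re-inserting the projection parts through \eqref{space:error} and the triangle inequality produces the stated bound. The hard part will be the bookkeeping of the coupling and thermoelastic cross-terms so that they cancel rather than accumulate, and the handling of the stationary $\psi$-equation: carrying no time derivative, it must be tested with the backward difference of its own discrete error, and control of $\|\theta_{\psi x}^n\|$ is then recovered only in combination with the $K(\varphi_x+\psi)$ coupling through the coercivity of the form $b(\cdot_x,\cdot_x)+K(\cdot_x+\cdot,\cdot)$, mirroring the static estimate in Lemma~\ref{lemma1:stab}. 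Equally delicate is ensuring that the reconstruction identities inject only $O(\Delta t)$ errors, so that the final estimate is genuinely linear in both $h$ and $\Delta t$.
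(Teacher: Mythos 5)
Your proposal is correct and follows essentially the same route as the paper: the same splitting of the error into a projection part (controlled by \eqref{space:error}) and a discrete part, the same choice of test functions (discrete velocity errors for the evolution equations and the backward difference of the $\psi$-error for the static equation, which is what the paper's $\hat{y}^n$ amounts to), the same telescoping discrete energy with Young's inequality, and the same conclusion via Taylor-in-time consistency estimates and a discrete Gronwall lemma. The only differences are notational, so no further comparison is needed.
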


\begin{proof}
As a first step, let us define
\begin{equation*}
z^n=u^n_h-P_h u(t_n),\;\hat{z}^n=\xi^n_h-P_h\xi(t_n),
\end{equation*}
\begin{equation*}
e^n=\varphi^n_h-P_h \varphi(t_n),\;\hat{e}^n=\Phi^n_h-P_h\Phi(t_n),
\end{equation*}
\begin{equation*}
y^n=\psi^n_h-P_h \psi(t_n),\;\hat{y}^n=\Psi^n_h-P_h \Psi(t_n),
\end{equation*}
and
\begin{equation*}
\varrho^n=w^n_h-P_h w(t_n),\;\hat{\varrho}^n=\vartheta^n_h-P_h \vartheta(t_n).
\end{equation*}
Substituting in the scheme (\ref{Stability:eq1}) 
and taking $\bar{u}_h=\hat{z}^n$, $\bar{\varphi}_h=\hat{e}^n$,
$\bar{\psi}_h=\hat{y}^n$, and $\bar{w}=\hat{\varrho}^n$, all this gives
\begin{equation}
\label{step1:eq1}	
\begin{split}
\dfrac{\rho}{2\Delta t}
&\left(\rVert \hat{z}^n-\hat{z}^{n-1} \lVert^2
+\rVert \hat{z}^n \lVert^2-\rVert \hat{z}^{n-1}\lVert^2 \right)
+\dfrac{\rho}{\Delta t}(P_h\xi(t_n)-P_h\xi(t_{n-1}),\hat{z}^n)
+\alpha(z^n_x,\hat{z}^n_x)\\
&+\alpha((P_h u(t_n))_x,\hat{z}^n_x)-\lambda(e^n-z^n,\hat{z}^n)
-\lambda(P_h\varphi(t_n)-P_h u(t_n),\hat{z}^n)
+ \mu\rVert \hat{z}^n \lVert^2\\&+\mu(P_h\xi(t_n),\hat{z}^n)=0, 
\end{split}
\end{equation}
\begin{equation}
\begin{split}
\dfrac{\rho_1}{2\Delta t}
&\left(\rVert \hat{e}^n-\hat{e}^{n-1} \lVert^2
+\rVert \hat{e}^n \lVert^2-\rVert \hat{e}^{n-1} \lVert^2 \right)
+\dfrac{\rho_1}{\Delta t}(P_h\Phi(t_n)-P_h\Phi(t_{n-1}),\hat{e}^n)\\
&+K(e^n_x+y^n,\hat{e}_{x}^n)+K((P_h\varphi(t_n))_x+P_h\psi(t_n),\hat{e}_{x}^n)
+\lambda(e^n-z^n,\hat{e}^n)\\
&+\lambda(P_h\varphi(t_n)-P_h u(t_n),\hat{e}^n)
+ \gamma\rVert \hat{e}^n \lVert^2+\gamma(P_h\Phi (t_n),
\hat{e}^n)+\beta(\hat{\varrho}^n_x,\hat{e}^n)\\
&+\beta((P_h\vartheta (t_n))_x,\hat{e}^n)=0, 
\end{split}
\end{equation}
\begin{equation}
 b(y^n_x ,\hat{y}^{n}_x)+b((P_h \psi(t_n))_x,\hat{y}^{n}_x)
+K(e^n_x+y^n,\hat{y}^n)+K((P_h\varphi(t_n))_x
+P_h\psi(t_n),\hat{y}^n)=0,
\end{equation}
\begin{equation}
\label{step1:eq2}	
\begin{split}
\dfrac{\rho_3}{2\Delta t}
&\left(\rVert \hat{\varrho}^n-\hat{\varrho}^{n-1} \lVert^2
+\rVert \hat{\varrho}^n \lVert^2-\rVert 
\hat{\varrho}^{n-1} \lVert^2 \right)
+\dfrac{\rho_3}{\Delta t}(P_h\vartheta(t_n)
-P_h\vartheta(t_{n-1}),\hat{\varrho}^n)\\
&+\delta (\varrho^n_x,\hat{\varrho}^n_x)
+\delta ((P_h w(t_n))_x,\hat{\varrho}^n_x)
+\beta(\hat{e}^n_x,\hat{\varrho}^n)
+\beta((P_h\Phi(t_n))_x,\hat{\varrho}^n)\\
&+\kappa \rVert \hat{\varrho}^n_x \lVert^2
+\kappa((P_h\vartheta(t_n))_x,\hat{\varrho}^n_x)=0.
\end{split} 
\end{equation}
Let $\bar{u}=\hat{z}^n$, 
$\bar{\varphi}=\hat{e}^n$, $\bar{\psi}=\hat{y}^n$,
$\bar{w}=\varrho^n$,
in the weak form (\ref{variational:problem}).
We combine the resulting equations with 
(\ref{step1:eq1})--(\ref{step1:eq2}) to obtain
\begin{equation}
\label{step2:eq1}	
\begin{split}
\dfrac{\rho}{2\Delta t}
&\left(\rVert \hat{z}^n-\hat{z}^{n-1} \lVert^2
+\rVert \hat{z}^n \lVert^2-\rVert \hat{z}^{n-1} \lVert^2 \right)
+\alpha(z^n_x,\hat{z}^n_x)-\lambda(e^n-z^n,\hat{z}^n)
+\mu \rVert \hat{z}^n \lVert^2\\
=&\rho(\xi_t(t_n)-\dfrac{P_h\xi(t_n)-P_h\xi(t_{n-1})}{\Delta t},
\hat{z}^n)+\alpha(u_x(t_n)-(P_hu(t_n))_x,\hat{z}^n_x)\\
&-\lambda(\varphi(t_n)-u(t_n)-(P_h\varphi(t_n)-P_h u(t_n)),
\hat{z}^n)+\mu(\xi(t_n)-P_h\xi(t_n),\hat{z}^n),
\end{split}
\end{equation}
\begin{equation}
\begin{split}
\dfrac{\rho_1}{2\Delta t}
&\left(\rVert \hat{e}^n-\hat{e}^{n-1} \lVert^2
+\rVert \hat{e}^n \lVert^2-\rVert \hat{e}^{n-1} \lVert^2 \right)
+K(e^n_x+y^n,\hat{e}_{x}^n)+\lambda(e^n-z^n,\hat{e}^n)
\\&+\gamma\rVert \hat{e}^n \lVert^2+\beta(\hat{\varrho}^n_x,\hat{e}^n)
=\rho_1(\Phi_t(t_n)-\dfrac{P_h\Phi(t_n)-P_h\Phi(t_{n-1})}{\Delta t},
\hat{e}^n)\\&+K(\varphi_x(t_n)+\psi(t_n)-((P_h\varphi(t_n))_x
+P_h\psi(t_n)),\hat{e}_{x}^n)\\
&+\lambda(\varphi(t_n)-u(t_n) -(P_h\varphi(t_n)-P_hu(t_n)),\hat{e}^n)\\
&+\gamma(\Phi(t_n)-P_h  \Phi(t_n),\hat{e}^n)+\beta(\vartheta_x(t_n)
-(P_h\vartheta(t_n))_x,\hat{e}^n),
\end{split}
\end{equation}
\begin{equation}
\begin{split} 
b(y^n_x ,\hat{y}^{n}_x)+K(e^n_x+y^n,\hat{y}^n)
=& b(\psi_x(t_n)-(P_h \psi(t_n))_x,\hat{y}^{n}_x)\\
&+K(\varphi_x(t_n)+\psi(t_n)-((P_h\varphi(t_n))_x
+P_h\psi(t_n)),\hat{y}^n),
\end{split}
\end{equation}
\begin{equation}
\label{step2:eq2}	
\begin{split}
\dfrac{\rho_3}{2\Delta t}
&\left(\rVert \varrho^n-\varrho^{n-1} \lVert^2
+\rVert \varrho^n \lVert^2-\rVert \varrho^{n-1} \lVert^2 \right)
+\delta(\varrho^n_x,\hat{\varrho}^n_x)+\beta(\hat{e}^n_x,
\hat{\varrho}^n)+\kappa\rVert \hat{\varrho}^n_x \lVert^2\\
=& \rho_3(\vartheta_t(t_n)-\dfrac{P_h\vartheta(t_n)-P_h 
\vartheta(t_{n-1})}{\Delta t},\hat{\varrho}^n)
+\delta(w_x(t_n)-(P_h w(t_n))_x,\hat{\varrho}^n_x)\\
&+ \beta(\Phi_x(t_n)-(P_h \Phi(t_n))_x,\hat{\varrho}^n)
+\kappa(\vartheta_x(t_n)-(P_h \vartheta(t_n))_x,\hat{\varrho}^n_x).
\end{split}
\end{equation}
We sum up the last four equations, to obtain that
\begin{equation}
\label{step3}	
\begin{split}
&\dfrac{\rho}{2\Delta t}\left(\rVert \hat{z}^n
-\hat{z}^{n-1} \lVert^2+\rVert \hat{z}^n \lVert^2
-\rVert \hat{z}^{n-1} \lVert^2 \right)+\alpha(z^n_x,\hat{z}^n_x)
+\lambda(e^n-z^n,\hat{e}^n-\hat{z}^n)+\mu\rVert \hat{z}^n \lVert^2\\
&+\dfrac{\rho_1}{2\Delta t}\left(\rVert \hat{e}^n
-\hat{e}^{n-1} \lVert^2+\rVert \hat{e}^n \lVert^2
-\rVert \hat{e}^{n-1} \lVert^2 \right)
+K(e^n_x+y^n,\hat{e}_{x}^n+\hat{y}^n)
+\gamma \rVert \hat{e}^{n} \lVert^2\\
&+b(y^n_x ,\hat{y}^{n}_x)+\dfrac{\rho_3}{2\Delta t}\left(\rVert 
\hat{\varrho}^n-\hat{\varrho}^{n-1} \lVert^2
+\rVert \hat{\varrho}^n \lVert^2-\rVert \hat{\varrho}^{n-1} 
\lVert^2 \right)+\delta(\varrho^n_x,\hat{\varrho}^n_x)
+\kappa\rVert \hat{\varrho}^n_x \lVert^2 \\[0.2cm]
=& \rho(\xi_t(t_n)-\dfrac{P_h\xi(t_n)-P_h\xi(t_{n-1})}{\Delta t},
\hat{z}^n)+\mu(\xi(t_n)-P_h\xi(t_n),\hat{z}^n)\\
& +\alpha(u_x(t_n)-(P_hu(t_n))_x,\hat{z}^n_x)
+\lambda(\varphi(t_n)-u(t_n)-(P_h\varphi(t_n)
-P_h u(t_n)),\hat{e}^n-\hat{z}^n)\\
&+\rho_1(\Phi_t(t_n)-\dfrac{P_h\Phi(t_n)-P_h\Phi(t_{n-1})}{\Delta t},
\hat{e}^n)+\gamma(\Phi(t_n)-P_h  \Phi(t_n),\hat{e}^n)\\
&+K(\varphi_x(t_n)+\psi(t_n)-((P_h\varphi(t_n))_x+P_h\psi(t_n)),
\hat{e}_{x}^n+\hat{y}^n)\\
&+b(\psi_x(t_n)-(P_h \psi(t_n))_x,\hat{y}^{n}_x)
+\rho_3(\vartheta_t(t_n)-\dfrac{P_h \vartheta(t_n)
-P_h \vartheta(t_{n-1})}{\Delta t},\hat{\varrho}^n)\\
&+\delta(w_x(t_n)-(P_h w(t_n))_x,\hat{\varrho}^n_x)
+\kappa(\vartheta_x(t_n)-(P_h \vartheta(t_n))_x,\hat{\varrho}^n_x).
\end{split}
\end{equation}
Now, by using the definition of $\hat{z}^n$, $\hat{e}^n$,  
$\hat{y}^n$ and $\hat{\varrho}^n$, we obtain the following estimates:
\begin{equation}
\label{step4:eq1}	
\begin{split}
(e^n-z^n,\hat{e}^n-\hat{z}^n)
&=( e^n-z^n,\dfrac{e^n-e^{n-1}}{\Delta t}-\dfrac{z^n-z^{n-1}}{\Delta t}) \\
&\;\;\;+( e^n-z^n,\dfrac{P_h\varphi(t_n)-P_h\varphi(t_{n-1})}{\Delta t}-P_h\Phi(t_n)) \\
&\;\;\;-( e^n-z^n,\dfrac{P_h u(t_n)-P_h u(t_{n-1})}{\Delta t}-P_h \xi(t_n))\\
&=\dfrac{1}{2\Delta t}\left(  \rVert e^n-z^n-(e^{n-1}-z^{n-1})\lVert^2
+\rVert e^n-z^n\lVert^2-\rVert e^{n-1}-z^{n-1}\lVert^2\right)\\
&\;\;\;+( e^n-z^n,\dfrac{P_h\varphi(t_n)-P_h\varphi(t_{n-1})}{\Delta t}-P_h\Phi(t_n))\\
&\;\;\;-( e^n-z^n,\dfrac{P_h u(t_n)-P_h u(t_{n-1})}{\Delta t}-P_h \xi(t_n)),
\end{split}
\end{equation}
\begin{equation}
\label{step4:eq2}	
\begin{split}
(e^n_x+y^n,\hat{e}_{x}^n+\hat{y}^n)
&=( e^n_x+y^n,\dfrac{e^n_x-e^{n-1}_x}{\Delta t}+\dfrac{y^n-y^{n-1}}{\Delta t})\\
&\;\;\;+( e^n_x+y^n,\dfrac{(P_h\varphi(t_n))_x
-(P_h\varphi(t_{n-1}))_x}{\Delta t}-(P_h\Phi(t_n))_x) \\
&\;\;\;+( e^n_x+y^n,\dfrac{P_h\psi(t_n)-P_h\psi(t_{n-1})}{\Delta t}-P_h\Psi(t_n))\\
&=\dfrac{1}{2\Delta t}\left(  \rVert e^n_x+y^n-(e^{n-1}_x+y^{n-1})
\lVert^2+\rVert e^n_x+y^n\lVert^2-\rVert e^{n-1}_x+y^{n-1}\lVert^2\right) \\
&\;\;\;+( e^n_x+y^n,\dfrac{(P_h\varphi(t_n))_x
-(P_h\varphi(t_{n-1}))_x}{\Delta t}-(P_h\Phi(t_n))_x) \\
&\;\;\;+( e^n_x+y^n,\dfrac{P_h\psi(t_n)-P_h\psi(t_{n-1})}{\Delta t}-P_h\Psi(t_n)),
\end{split}
\end{equation}
\begin{equation}
\label{step4:eq3}	
\begin{split}
(z^n_x,\hat{z}^n_x)
&=\dfrac{1}{2\Delta t}\left(\rVert z^n_x-z^{n-1}_x \lVert^2
+\rVert z^n_x \lVert^2-\rVert z^{n-1}_x \lVert^2 \right)\\
&\;\;\;+(z^n_x,\dfrac{(P_h u(t_n))_x
-(P_h u(t_{n-1}))_x}{\Delta t}-(P_h \xi(t_n))_x),
\end{split}
\end{equation}
\begin{equation}
\label{step4:eq4}	
\begin{split}
(y^n_x,\hat{y}^n_x)
&=\dfrac{1}{2\Delta t}\left(\rVert y^n_x-y^{n-1}_x \lVert^2
+\rVert y^n_x \lVert^2-\rVert y^{n-1}_x \lVert^2 \right)\\
&\;\;\;+(y^n_x,\dfrac{(P_h\psi(t_n))_x
-(P_h\psi(t_{n-1}))_x}{\Delta t}-(P_h\Psi(t_n))_x),
\end{split}
\end{equation}
and
\begin{equation}
\label{step4:eq}	
\begin{split}
(\varrho^n_x,\hat{\varrho}^n_x)
&=\dfrac{1}{2\Delta t}\left(\rVert \varrho^n_x-\varrho^{n-1}_x 
\lVert^2+\rVert \varrho^n_x \lVert^2
-\rVert \varrho^{n-1}_x \lVert^2 \right)\\
&\;\;\;+(\varrho^n_x,\dfrac{(P_h w(t_n))_x
-(P_h w(t_{n-1}))_x}{\Delta t}-(P_h \vartheta(t_n))_x).
\end{split}
\end{equation}
Inserting (\ref{step4:eq1})--(\ref{step4:eq}) into (\ref{step3}), 
then keeping in mind that $\rVert \hat{z}^n-\hat{z}^{n-1} \lVert^2$, 
$\rVert z^n_x- z^{n-1}_x \lVert^2$, 
$\rVert e^n-z^n-(e^{n-1}-z^{n-1})\lVert^2$, 
$\rVert \hat{e}^n-\hat{e}^{n-1} \lVert^2$, 
$\rVert e^n_x+y^n-(e^{n-1}_x+y^{n-1})\lVert^2$,
$\rVert y^n_x-y^{n-1}_x \lVert^2$,
$\rVert \hat{\varrho}^n- \hat{\varrho}^{n-1} \lVert^2$,
$\rVert \varrho^n_x-\varrho^{n-1}_x \lVert^2$,
$\rVert \hat{z}^n \lVert^2$, $\rVert \hat{e}^n \lVert^2$, 
and  $\rVert \hat{\varrho}^n_x \lVert^2$
are positive terms, we arrive at
\pagebreak
\begin{equation*}
\label{step4}	
\begin{split}
&\dfrac{\rho}{2\Delta t}\left(\rVert \hat{z}^n \lVert^2
-\rVert \hat{z}^{n-1} \lVert^2 \right) 
+\dfrac{\alpha}{2\Delta t}\left(\rVert z^n_x \lVert^2
-\rVert z^{n-1}_x \lVert^2 \right)
+\dfrac{\lambda}{2\Delta t}\left(\rVert e^n-z^n\lVert^2
-\rVert e^{n-1}-z^{n-1}\lVert^2\right)\\
&+\dfrac{\rho_1}{2\Delta t}\left(\rVert \hat{e}^n \lVert^2
-\rVert \hat{e}^{n-1} \lVert^2 \right)
+\dfrac{K}{2\Delta t}\left(\rVert e^n_x+y^n\lVert^2
-\rVert e^{n-1}_x+y^{n-1}\lVert^2\right)
+\dfrac{b}{2\Delta t}\left(\rVert y^n_x \lVert^2
-\rVert y^{n-1}_x \lVert^2 \right)\\
&+\dfrac{\rho_3}{2\Delta t}\left(\rVert \hat{\varrho}^n \lVert^2
-\rVert \hat{\varrho}^{n-1} \lVert^2 \right)
+\dfrac{\delta}{2\Delta t}\left(\rVert \varrho^n_x \lVert^2
-\rVert \varrho^{n-1}_x \lVert^2 \right)\\[0.2cm]
\leqslant &\rho(\xi_t(t_n)-\dfrac{P_h\xi(t_n)-P_h \xi(t_{n-1})}{\Delta t},
\hat{z}^n)+\alpha(u_x(t_n)-(P_hu(t_n))_x,\hat{z}^n_x)\\
&-\alpha(z^n_x,\dfrac{(P_h u(t_n))_x
-(P_h u(t_{n-1}))_x}{\Delta t}-(P_h \xi(t_n))_x)
+\lambda(\varphi(t_n)-u(t_n)-(P_h\varphi(t_n)-P_h u(t_n)),\hat{e}^n-\hat{z}^n)\\
&-\lambda( e^n-z^n,\dfrac{P_h\varphi(t_n)
-P_h\varphi(t_{n-1})}{\Delta t}-P_h\Phi(t_n))
+\lambda( e^n-z^n,\dfrac{P_h u(t_n)-P_h u(t_{n-1})}{\Delta t}
-P_h \xi(t_n))\\&+\mu(\xi(t_n)-P_h\xi(t_n),\hat{z}^n)
+\rho_1(\Phi_t(t_n)-\dfrac{P_h\Phi(t_n)-P_h\Phi(t_{n-1})}{\Delta t},\hat{e}^n)\\
&+K(\varphi_x(t_n)+\psi(t_n)-((P_h\varphi(t_n))_x
+P_h\psi(t_n)),\hat{e}_{x}^n+\hat{y}^n)\\&-K( e^n_x+y^n,\dfrac{(P_h\varphi(t_n))_x
-(P_h\varphi(t_{n-1}))_x}{\Delta t}-(P_h\Phi(t_n))_x)\\
&-K( e^n_x+y^n,\dfrac{P_h\psi(t_n)-P_h\psi(t_{n-1})}{\Delta t}
-P_h\Psi(t_n))+\gamma(\Phi(t_n)-P_h\Phi(t_n),\hat{e}^n)\\
&+b(\psi_x(t_n)-(P_h \psi(t_n))_x,\hat{y}^{n}_x) -b (y^n_x,
\dfrac{(P_h\psi(t_n))_x-(P_h\psi(t_{n-1}))_x}{\Delta t}
-(P_h\Psi(t_n))_x)\\
&+\rho_3( \vartheta_t(t_n)-\dfrac{P_h \vartheta(t_n)
-P_h \vartheta(t_{n-1})}{\Delta t},\hat{\varrho}^n) 
+\delta(w_x(t_n)-(P_h w(t_n))_x,\hat{\varrho}^n_x)\\
&-\delta(\varrho^n_x,\dfrac{(P_h w(t_n))_x
-(P_h w(t_{n-1}))_x}{\Delta t}-(P_h \vartheta(t_n))_x)
+\kappa(\vartheta_x(t_n)-(P_h \vartheta(t_n))_x,\hat{\varrho}^n_x).
\end{split}
\end{equation*}
Finally, let $Z_n=\rho\rVert\hat{z}^n\lVert^2+\alpha\rVert z^n_x\lVert^2
+\lambda\rVert e^n-z^n\lVert^2+\rho_1\rVert\hat{e}^n\lVert^2
+K\rVert e^n_x+y^n\lVert^2+b\rVert y^n_x\lVert^2+\rho_3\rVert 
\hat{\varrho}^n\lVert^2+\delta\rVert \varrho^n_x\lVert^2$. 
Using Young's inequality, we easily find that
\begin{equation*}
\begin{split}
Z_n-Z_{n-1}\leqslant 2C\Delta t \Bigl( Z_n
&+\left\lVert \xi_t(t_n)-\dfrac{P_h\xi(t_n)-P_h 
\xi(t_{n-1})}{\Delta t} \right\rVert^2
+\lVert u_x(t_n)-(P_hu(t_n))_x \rVert^2\\
&+\left\lVert \dfrac{(P_h u(t_n))_x-(P_h u(t_{n-1}))_x}{\Delta t}
-(P_h \xi(t_n))_x \right\rVert^2\\
&+\lVert \varphi(t_n)-u(t_n)-(P_h\varphi(t_n)-P_h u(t_n)) \rVert^2\\
&+\left\lVert \dfrac{P_h\varphi(t_n)-P_h\varphi(t_{n-1})}{\Delta t}
-P_h\Phi(t_n) \right\rVert^2\\
&+\left\lVert \dfrac{P_h u(t_n)-P_h u(t_{n-1})}{\Delta t}
-P_h \xi(t_n) \right\rVert^2+\left\lVert \xi(t_n)-P_h\xi(t_n) \right\rVert^2\\
&+\left\lVert\Phi_t(t_n)-\dfrac{P_h\Phi(t_n)-P_h\Phi(t_{n-1})}{\Delta t}\right\rVert^2\\
&+ \lVert \varphi_x(t_n)+\psi(t_n)-((P_h\varphi(t_n))_x+P_h\psi(t_n))\rVert^2\\
&+\left\lVert\dfrac{(P_h\varphi(t_n))_x-(P_h\varphi(t_{n-1}))_x}{\Delta t}
-(P_h\Phi(t_n))_x\right\rVert^2\\
&+\left\lVert \dfrac{P_h\psi(t_n)-P_h\psi(t_{n-1})}{\Delta t}-P_h\Psi(t_n)\right\rVert^2
\\&+\lVert \Phi(t_n)-P_h\Phi(t_n) \rVert^2+\lVert \psi_x(t_n)-(P_h \psi(t_n))_x \rVert^2\\
&+\left\lVert \dfrac{(P_h\psi(t_n))_x-(P_h\psi(t_{n-1}))_x}{\Delta t}
-(P_h\Psi(t_n))_x \right\rVert^2\\
& +\left\lVert \vartheta_t(t_n)-\dfrac{P_h \vartheta(t_n)
-P_h \vartheta(t_{n-1})}{\Delta t} \right\rVert^2
+ \lVert w_x(t_n)-(P_h w(t_n))_x \rVert^2\\
&+ \left\lVert  \dfrac{(P_h w(t_n))_x-(P_h w(t_{n-1}))_x}{\Delta t}
-(P_h \vartheta(t_n))_x \right\rVert^2 + \lVert\vartheta_x(t_n)
-(P_h \vartheta(t_n))_x \rVert^2\Bigr).
\end{split}
\end{equation*}
As a consequence, we have
\begin{equation}
\label{step5}	
Z_n-Z_{n-1}\leqslant 2C \Delta t (Z_n+R_n),
\end{equation}
where the residual $R_n$ is the sum of the approximation errors.  
Summing the previous inequality over $n$, it follows that 
\begin{equation*}
Z_n-Z_{0}\leqslant 2C \Delta t \sum_{j=1}^{n} (Z_j+R_j)
\end{equation*}
and, making use of Taylor's expansion in time and (\ref{space:error}) 
to estimate  the time and the space error, we get that
\begin{equation*}
2C \Delta t \sum_{j=1}^{n} R_j\leqslant C(h^2+(\Delta t)^2).
\end{equation*}
Since $Z_{0}=0$, we end up with
\begin{equation*}
Z_n\leqslant 2C \Delta t \sum_{j=1}^{n} Z_j+ C  (h^2+(\Delta t)^2).
\end{equation*}
The result follows by applying a discrete version 
of Gronwall's inequality and taking into account 
that $n \Delta t \leqslant T$.
\end{proof}


\section{Simulations}
\label{sec:06}

In our simulations, we select the following values:
\begin{equation*}
L=1,\;h=0.01,\; \Delta t=h/2,\;\alpha=6,\;\rho_1=2,\;
K=365,\;\rho=\lambda=\mu=\gamma=\beta=b=\rho_3= \delta=\kappa=1,
\end{equation*}
taking as initial conditions
\begin{equation*}
u_0(x)=u_1(x)=\varphi_0(x)= \varphi_1(x)
=\psi_0(x)=w_0(x)=w_1(x)=\sin(\pi x).
\end{equation*} 
The evolution of $u$, $\varphi$,  $\psi$ and $w$  
are represented in 3D in Figures~\ref{fig:1},  
\ref{fig:2}, \ref{fig:3} and \ref{fig:4}, respectively. 
\begin{figure}[b]
\centering
\includegraphics[width=14cm]{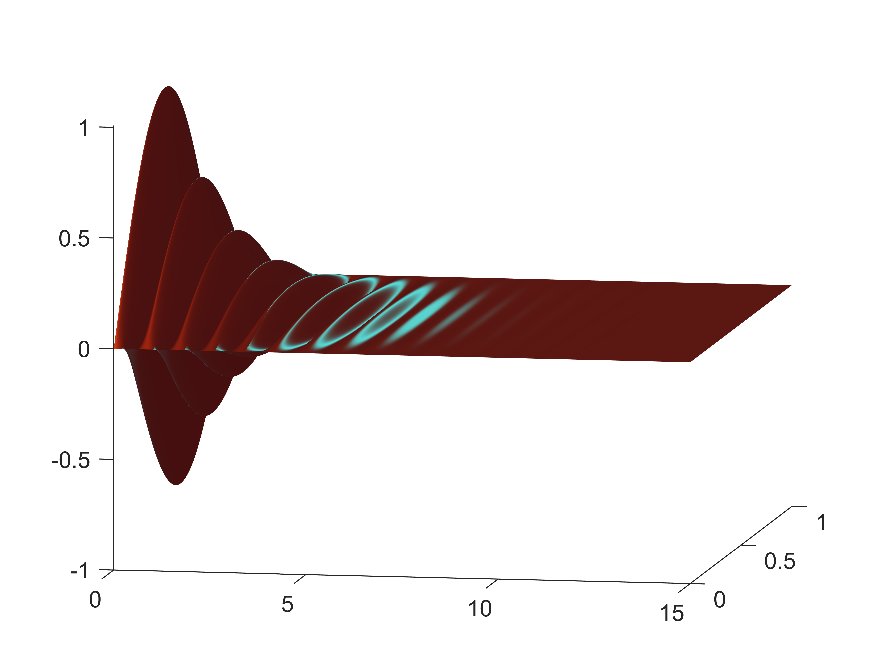} \\
\caption{The evolution in time and space of $u$.}	
\label{fig:1}
\end{figure}
\begin{figure}
\centering
\includegraphics[width=14cm]{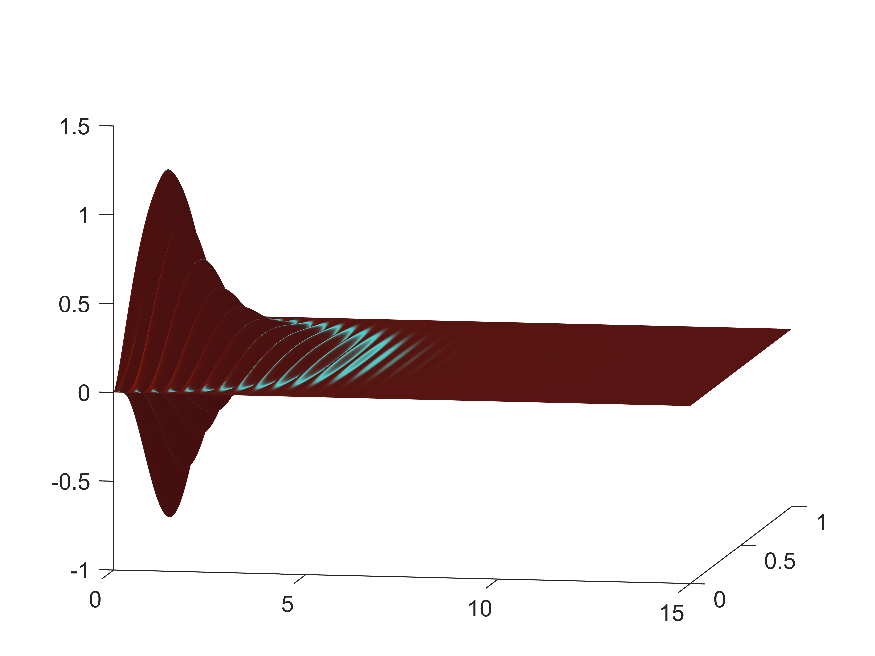} \\
\caption{The evolution in time and space of $\varphi$.}	
\label{fig:2}
\end{figure}	
\begin{figure}
\centering
\includegraphics[width=14cm]{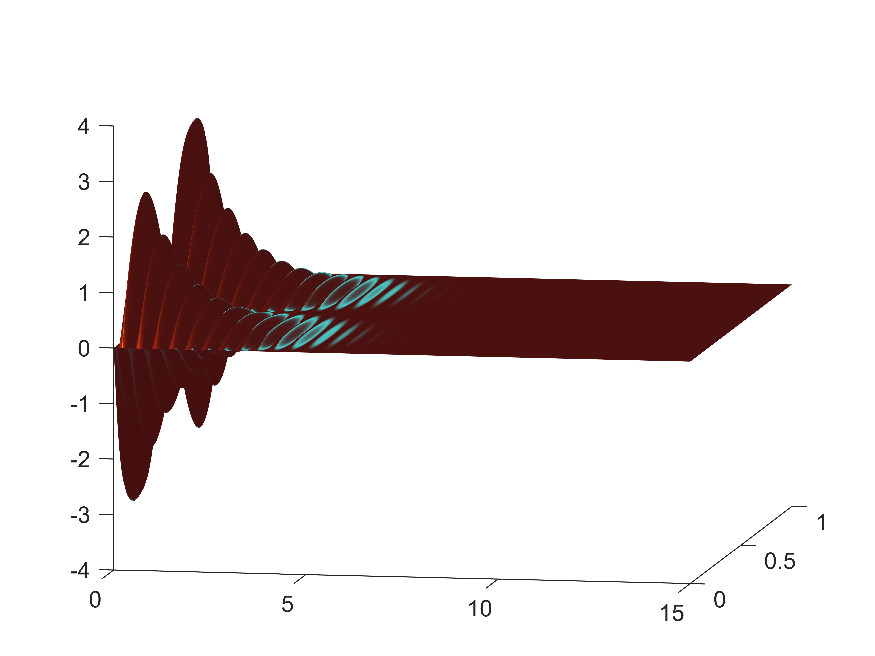} \\
\caption{The evolution in time and space of $\psi$.}
\label{fig:3}	
\end{figure}
\begin{figure}
\centering
\includegraphics[width=14cm]{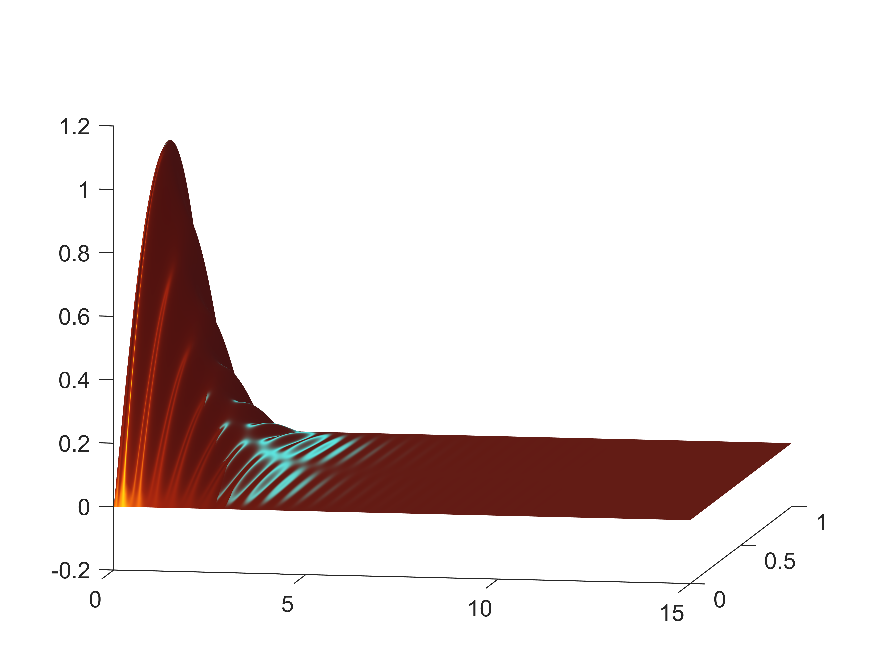} \\
\caption{The evolution in time and space of $w$.}
\label{fig:4}	
\end{figure}

The results at point $x=0.6$ 
are displayed in Figures~\ref{fig:5}, 
\ref{fig:6} and \ref{fig:7}.
\begin{figure}
\centering
\includegraphics[width=14cm]{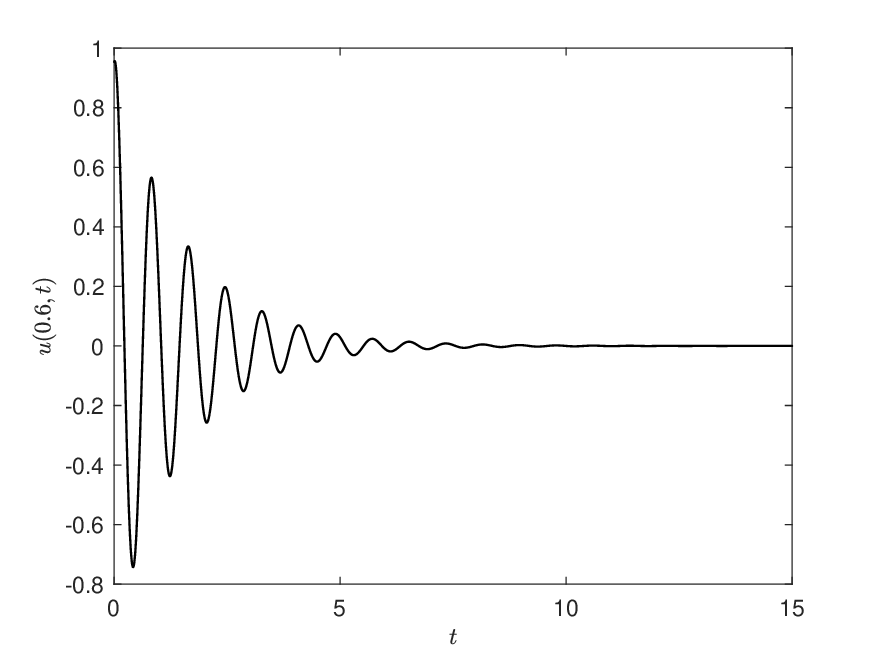} \\
\caption{The evolution in time of $u$ at $x=0.6$.}	
\label{fig:5}
\end{figure}
\begin{figure}
\centering
\includegraphics[width=14cm]{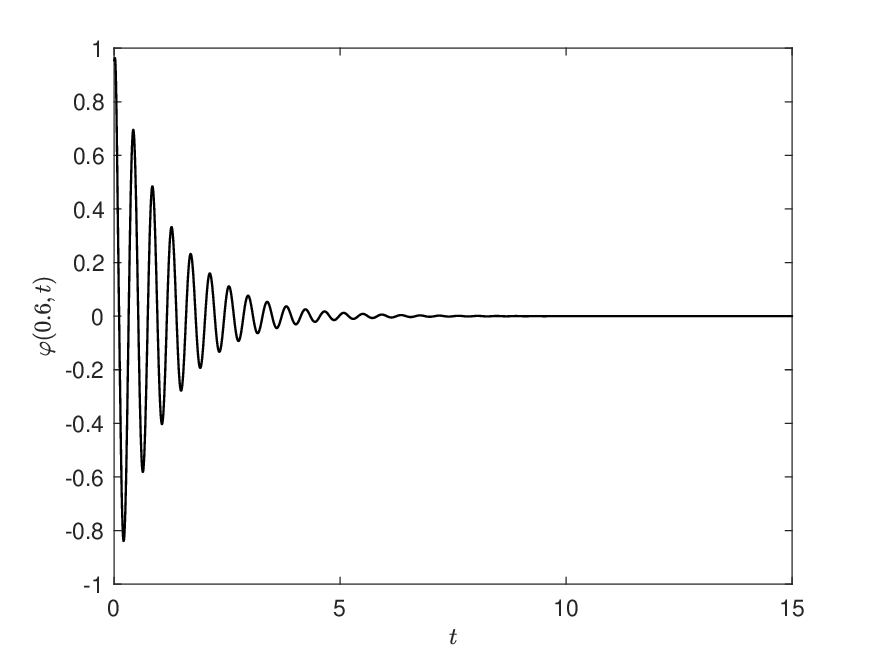} \\
\caption{The evolution in time of $\varphi$ at $x=0.6$.}	
\label{fig:6}
\end{figure}
\begin{figure}
\centering
\includegraphics[width=14cm]{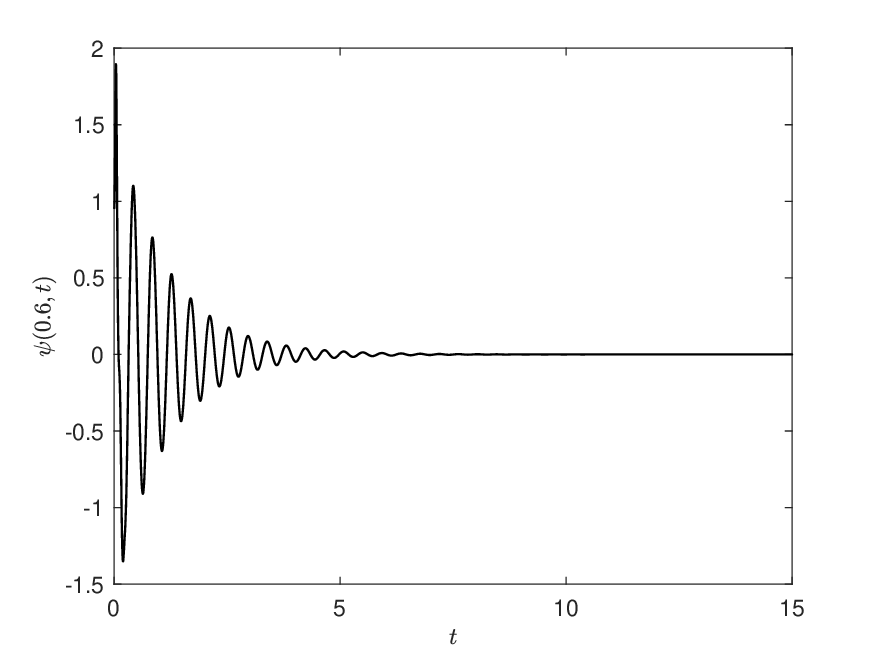} \\
\caption{The evolution in time of $\psi$ at $x=0.6$.}	
\label{fig:7}
\end{figure}

The decay of energy with respect to time 
is shown in Figures~\ref{fig:8}, 
\ref{fig:9} and \ref{fig:10}.
\begin{figure}
\centering
\includegraphics[width=14cm]{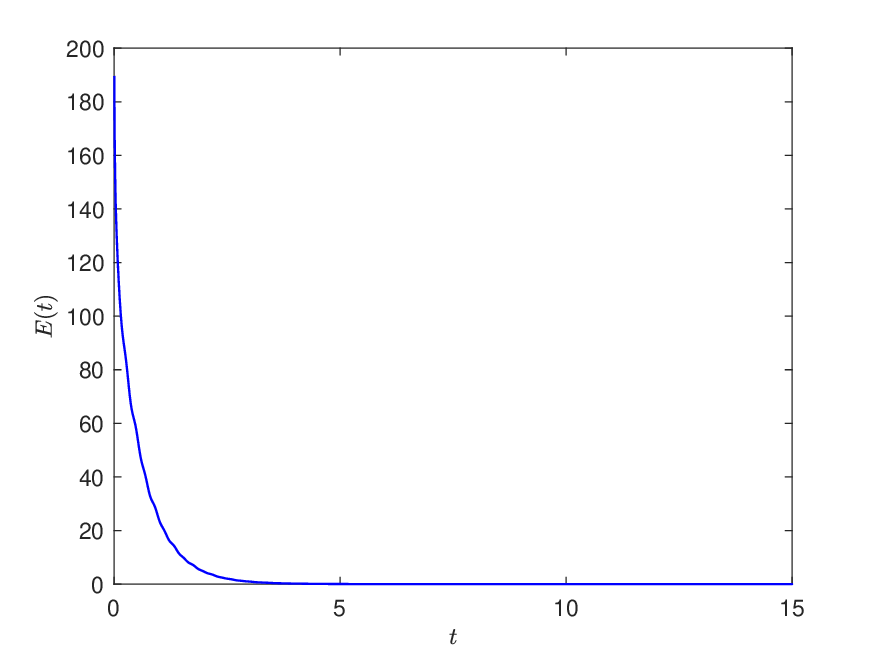} \\
\caption{The evolution in time of energy $E$.}	
\label{fig:8}
\end{figure}
\begin{figure}
\centering
\includegraphics[width=14cm]{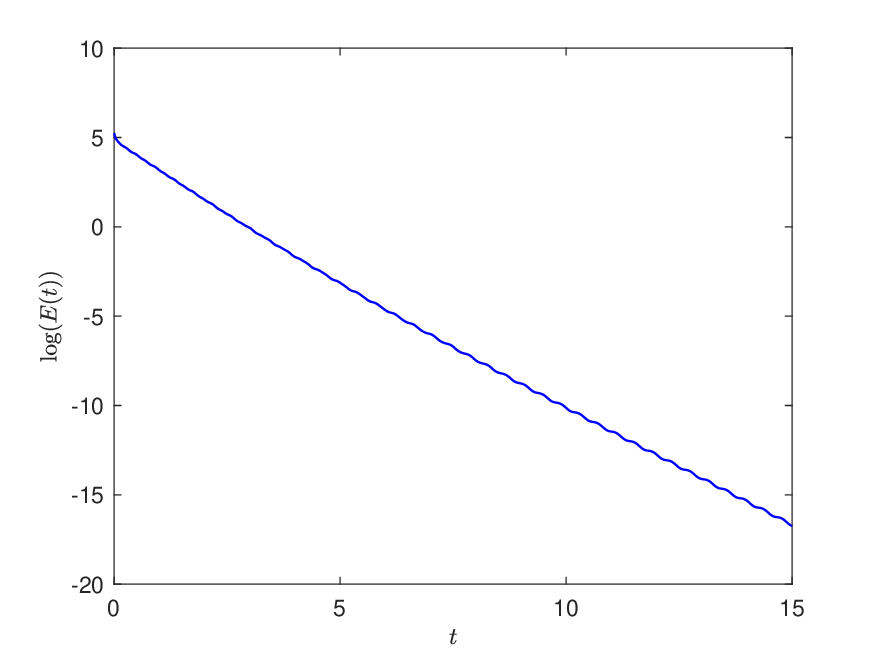} \\
\caption{The evolution in time of $\log(E(t))$.}	
\label{fig:9}
\end{figure}
\begin{figure}
\centering
\includegraphics[width=14cm]{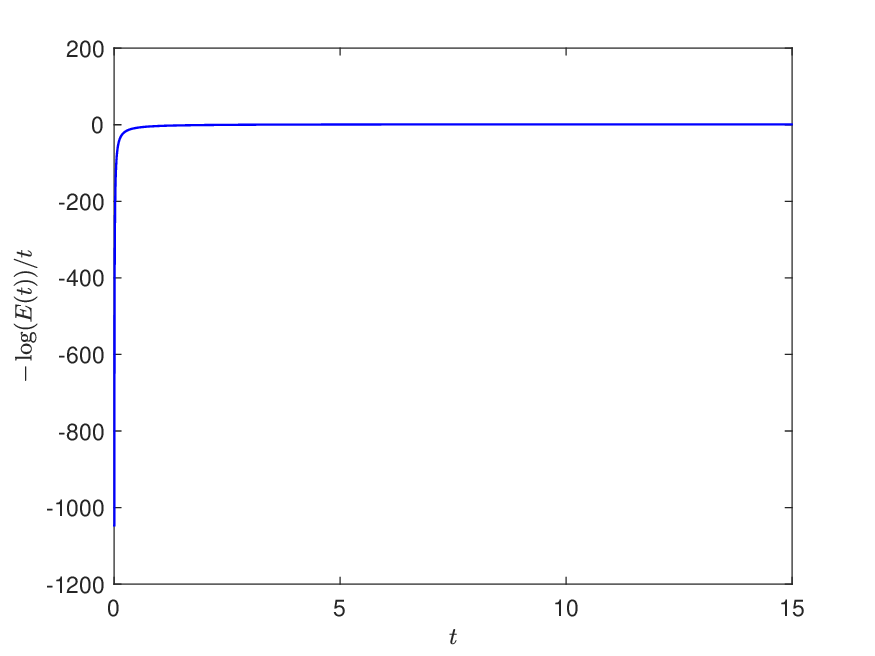} \\
\caption{The evolution in time of $-\log(E(t))/t$.}	
\label{fig:10}
\end{figure}	

Following this, we carried out a numerical simulation 
to evaluate the accuracy of the error estimate. 
We solved the modified problem 
\begin{equation}
\begin{cases}
\rho u _{tt}-\alpha u_{xx} -\lambda\left( \varphi - u\right)
+\mu u_t =f_1,\\ 
\rho _{1}\varphi _{tt} -K\left( \varphi _{x}+\psi \right)_{x}
+\lambda\left( \varphi - u\right)+\gamma \varphi_t +\beta w_{xt}=f_2, \\ 
-b\psi _{xx} +K\left(
\varphi _{x}+\psi \right)=f_3, \\ 
\rho_3 w_{tt}-\delta w_{xx} +\beta\varphi_{xt}-\kappa w_{xxt}=f_4,\\ 
\end{cases}
\end{equation}
where functions $f_1$, $f_2$, $f_3$, $f_4$, 
and the initial data are derived from the exact solution
\begin{equation*}
\begin{split}
u(x,t)&=0.01tx^2(x-1)^2,\\
\varphi(x,t)&=e^t\sin(\pi x),\\
\psi(x,t)&=e^t x\cos(0.5\pi x),\\
w(x,t)&=2e^t\sin(\pi x).\\
\end{split}
\end{equation*}
The calculated errors at time $t = 1.2$ are presented 
in Table~\ref{tab:1}, where the $Error$ is defined as
\begin{equation*}
\begin{split}
Error=&( \rVert \xi^n_h-\xi(t_n)\lVert^2+\rVert u^n_{hx}-(u(t_n))_x\lVert^2
+ \rVert \varphi^n_{h}-u^n_h-(\varphi(t_n)-u(t_n))\lVert^2\\
&+\rVert \Phi^n_h-\Phi(t_n)\lVert^2+\rVert \varphi^n_{hx}
+\psi^n_h-((\varphi(t_n))_x+\psi(t_n))\lVert^2 
+\rVert \psi^n_{hx}-(\psi(t_n))_x\lVert^2\\
&+\rVert \vartheta^n_{h}-\vartheta(t_n)\lVert^2 
+\rVert w^n_{hx}-(w(t_n))_x\lVert^2 )^{\frac{1}{2}}.
\end{split}
\end{equation*}
It can be observed that the errors decrease by a factor of approximately 2 
when the discretization parameters are halved. The linear convergence rate 
is also evident in the curves illustrated in Figure~\ref{fig:11}.
\begin{table}[t]
\caption{Computed errors when $T = 1.2$.}\label{tab:1}	
\begin{center}
\begingroup
\setlength{\tabcolsep}{30pt}
\renewcommand{\arraystretch}{1.5}
\begin{tabular}{c  c c}
\hline
\rule[0cm]{0cm}{0.5cm}	$M$      & $\Delta t$    & $Error$   \\
\hline 
\rule[0cm]{0cm}{0.5cm}
$40$  & $1.00\times10^{-3}$ &
$4.164\times10^{-1}$   \\ 
\rule[0cm]{0cm}{0.5cm}	
$80$    & $5.00\times 10^{-4}$  &
$1.949\times 10^{-1}$   \\ 
\rule[0cm]{0cm}{0.5cm}
$160$  & $2.50\times 10^{-4}$ & 
$9.567\times 10^{-2}$     \\ 
\rule[0cm]{0cm}{0.5cm}
$320$  & $1.25\times 10^{-4}$  &
$4.770\times 10^{-2}$     \\ 
\rule[0cm]{0cm}{0.5cm}
$640$    & $6.25\times 10^{-5}$  & 
$2.402\times 10^{-2}$       \\ 
\rule[0cm]{0cm}{0.5cm}
$1280$  & $3.125\times 10^{-5}$ & 
$1.241\times 10^{-2}$   \\ \hline
\end{tabular}
\endgroup
\end{center}
\end{table}
\begin{figure}
\centering  
\subfigure{\includegraphics[width=14cm]{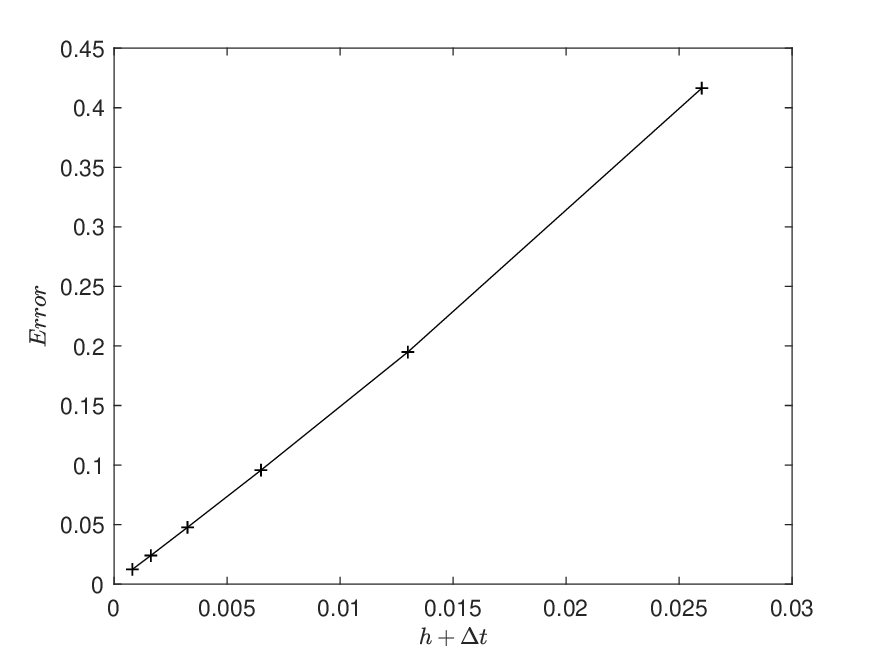}}	
\subfigure{\includegraphics[width=14cm]{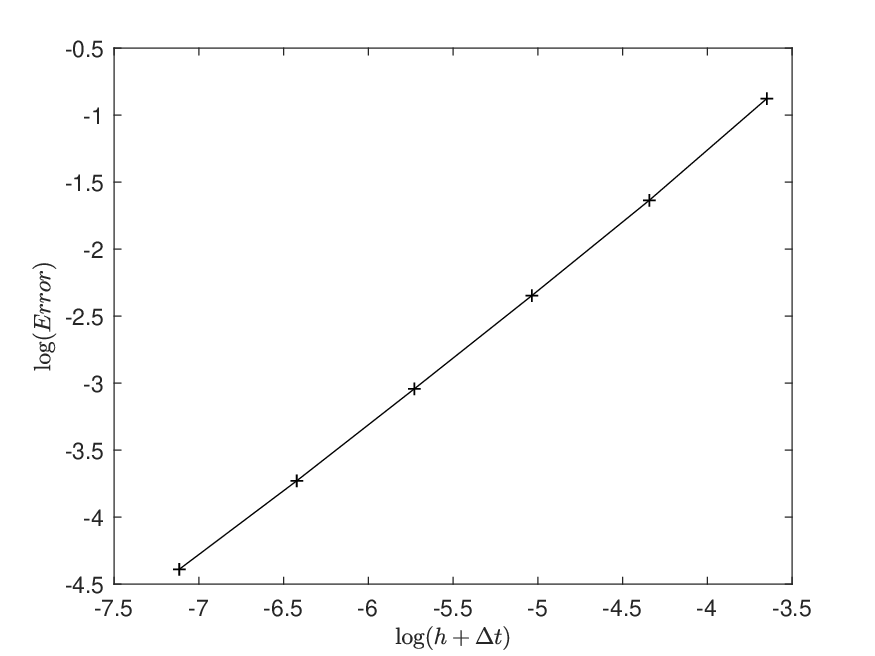}}
\caption{The evolution of the error depending on $h+\Delta t$.}	
\label{fig:11}
\end{figure}


\section*{Acknowledgments}

This research is part of first author's Ph.D., 
which is carried out at Ferhat Abbas S\'etif 1 University.
Chabekh is grateful to the financial support 
of Ferhat Abbas S\'etif 1 University, Algeria,
for a one-month visit to the R\&D Unit CIDMA, 
Department of Mathematics, University of Aveiro. 
The hospitality of the host institution 
is here gratefully acknowledged. 
Torres is supported by the Portuguese 
Foundation for Science and Technology (FCT)
within project UIDB/04106/2020.

The authors are grateful to two
referees for several pertinent questions
and comments that helped them to improve
the originally submitted version.



\bigskip


\end{document}